\newcommand\grad{{\bf \nabla}}
\newcommand\rvec{{\bf r}}
\newcommand\la{{\lambda}}
\newcommand\nvec{{\bf n}}
\newcommand\evec{{\bf e}}
\newcommand\mvec{{\bf m}}
\newcommand\pvec{{\bf p}}
\newcommand\eps{{\epsilon}}
\newcommand\Qvec{{\bf Q}}
\newcommand{\Acal}{{\cal A}}
\newcommand{\Rr}{{\mathbb R}}
\newtheorem{thm}{Theorem}
\newtheorem{lem}{Lemma}
\newtheorem{prop}{Proposition}
\title{Equilibrium order parameters of nematic liquid crystals in the Landau-De Gennes theory }
\author{Apala Majumdar\thanks{Mathematical Institute, University of Oxford,
        24--29 St.Giles, Oxford, OX1 3LB ({\tt majumdar@maths.ox.ac.uk}).}
        }
\begin{document}

\maketitle

 \begin{abstract} We study equilibrium liquid crystal
 configurations in three-dimensional domains, within the continuum
 Landau-De Gennes theory. We obtain explicit bounds for the
 equilibrium scalar order parameters in terms of the temperature
 and material-dependent constants. We explicitly quantify the
 temperature regimes where the Landau-De Gennes predictions match
 and the temperature regimes where the Landau-De Gennes
 predictions don't match the probabilistic second-moment definition of the $\Qvec$-tensor
 order parameter. The regime of agreement may be interpreted as
 the regime of validity of the Landau-De Gennes theory since the Landau-De Gennes theory 
predicts large values of the equilibrium scalar order parameters - larger than unity, in the low-temperature regime. We discuss
 a modified Landau-De Gennes energy functional which yields
 physically realistic values of the equilibrium scalar order parameters in all temperature regimes.
\end{abstract}

\textbf{Keywords:} Nematic liquid crystals, order parameters,
equilibrium configurations, Landau--De Gennes theory

\textbf{AMS Classification:
} 35Qxx, 35Jxx, 35Bxx

\section{Introduction}
 Liquid crystals are an intermediate phase of matter between the
 commonly observed solid and liquid phases \cite{dg}. In the
 simplest liquid crystal phase, the nematic phase, the constituent
 rod--like molecules translate freely as in a conventional liquid
 but whilst flowing, tend to align along certain locally preferred directions i.e. they
 exhibit a certain degree of long-range orientational ordering.
Liquid crystals have attracted a lot of interest in recent years
because of their unique physical properties and continue to do so
because of their diverse applications \cite{kg} and their
analogies to other physical systems.

The mathematical theory of nematic liquid crystals is very rich;
for a review see \cite{straley, Lin}. The key ingredient of any
mathematical theory for nematic liquid crystals is the definition
of an \emph{order parameter} that distinguishes the ordered
nematic phase from the disordered isotropic liquid phase.
Mean-field liquid crystal theories, such as the Maier-Saupe
theory, describe the liquid crystal configuration in terms of a
probability distribution function $\psi$ on the unit sphere. The
order-parameter, known as the $\Qvec$-tensor order parameter, is
defined in terms of the second moment of $\psi$ \cite{dg,
newtonmottram}. This \emph{probabilistic second-moment definition}
naturally requires $\Qvec$ to be a symmetric, traceless $3\times
3$ matrix and imposes certain constraints on its eigenvalues,
which represent the degree of ordering. The Landau-De Gennes
theory, on the other hand, is a continuum theory for nematic
liquid crystals and does not contain any information about either
$\psi$ or the intermolecular interactions \cite{dg,
newtonmottram}. The $\Qvec$-tensor order parameter, within the
Landau-De Gennes framework, is a symmetric, traceless $3\times 3$
matrix with no a priori constraints on the eigenvalues. The
Landau-De Gennes energy functional is a nonlinear integral
functional of $\Qvec$ and its spatial derivatives and the
equilibrium, physically observable configurations correspond to
either global or local minimizers of this energy subject to the
imposed boundary conditions.

 A natural question of interest is - do the
 equilibrium configurations predicted by the Landau-De Gennes theory
 agree with the probabilistic second-moment definition of $\Qvec$? We systematically address
  this question in this paper. We obtain explicit bounds for the scalar order parameters
  of global energy minimizers, referred to as \emph{equilibrium scalar order parameters}, in
  terms of the temperature and the material-dependent constants. These bounds quantify
  (to some extent) the competing effects of the different terms in the Landau-De Gennes
  energy density. Further, these bounds are compared to the probabilistic second-moment
  definition of $\Qvec$. This allows us to explicitly delineate the regions of agreement
   and the regions of disagreement and we find that the Landau-De Gennes predictions don't
   match the probabilistic second-moment definition in the low-temperature regime. In particular, the equilibrium scalar order parameters, within the Landau-De Gennes framework, can take physically unrealistic values (larger than unity) in the low-temperature regime. Our results largely depend on the use of maximum principle type of arguments for
    nonlinear elliptic systems of partial differential equations and they can be
    readily extended to more general energy functionals than the ones considered in this paper.

The derivation of the Landau-De Gennes energy density is valid
near the isotropic state, close to the nematic-isotropic
transition temperature. Therefore, it is well-expected that the
predictions are physically unrealistic in the low temperature
regime. However, our results show that the Landau-De Gennes
predictions fail to be consistent with the probabilistic
second-moment definition within a sufficiently small neighbourhood
of the nematic-isotropic transition temperature. In principle, one
would want to develop a continuum theory that works for all
temperature regimes. In the last part of the paper, we briefly
outline a Ginzburg-Landau approach that remedies the flawed
predictions in the low temperature regimes. We define a modified
Landau-De Gennes energy functional such that the energy density
blows up whenever the liquid crystal configuration violates the
constraints imposed by the probabilistic second-moment definition
of $\Qvec$ or equivalently whenever the scalar order parameters are physically unrealistic. One deficiency of this approach is that it has no
apparent connection with the mean-field microscopic approaches. A
different approach has been suggested in \cite{luckhurst} and we
hope to systematically investigate a microscopic-macroscopic
derivation of a continuum energy functional in future work
\cite{jmbam}.

 The paper is organized as follows. In Section~\ref{sec:2}, we review the
 probabilistic second-moment definition of the order parameter. In Section~\ref{sec:3},
 we study equilibrium liquid crystal configurations within the continuum
 Landau--De Gennes theory. In Section~3.1,
 we consider spatially homogeneous cases whereas in Sections~3.2 and 3.3, we
 include spatial inhomogeneities into the model and obtain upper bounds for the
 corresponding equilibrium scalar order parameters. These bounds explicitly
 define the domain for the equilibrium scalar order parameters in terms of the
 temperature and the material-dependent parameters. In Section~\ref{sec:4},
 we discuss the main results and conclusions of this paper and suggest
 future research directions.

 \section{The Probabilistic Second-Moment Definition}
 \label{sec:2}

In this section, we briefly review the probabilistic second-moment
definition of the $\Qvec$-tensor order parameter and the
Maier-Saupe mean-field liquid crystal theory. The interested
reader is referred to \cite{dg, ms, virga} for details and we
present the main points here for completeness.

Within the simplest microscopic model, the nematic molecules are
modelled by elongated rods where the long molecular axes tend to
align along certain locally preferred directions \cite{dg,
newtonmottram}. The state of alignment of the nematic molecules is
described by a probability distribution function for the molecular
orientations on the unit sphere, $\psi: S^2 \rightarrow
\mathbb{R}^+$, since $S^2\subset \mathbb{R}^3$ is the space of all
admissible directions. The probability distribution function,
$\psi(\pvec)$, gives the probability of finding molecules oriented
in the direction $\pvec \in S^2$. Then $\psi$ has the following
properties - \cite{dg,ball} -
\begin{eqnarray} && \psi(\pvec) \geq 0 \quad \pvec\in S^2 \nonumber \\ &&
\psi\left(\pvec\right) = \psi\left(-\pvec \right) \label{eq:3a} \\
&& \int_{S^2} \psi\left(\pvec\right)~d\pvec = 1. \label{eq:3}
\end{eqnarray} where (\ref{eq:3a}) accounts for the indistinguishability of the states $\pvec$ and $-\pvec$ on the unit sphere.

The macroscopic variables are defined in terms of the moments of
$\psi$. The first moment vanishes because of the equivalence
between antipodal points, $\pvec\equiv-\pvec$. We define the
nematic order parameter, the $\Qvec$-tensor order parameter, to be
the normalized second moment of the probability distribution
function as follows \cite{dg, newtonmottram}-
\begin{equation}
\Qvec = \int_{S^2}\left(\pvec \otimes \pvec -
\frac{1}{3}\mathbf{I}\right)~\psi(\pvec)~d\pvec \label{eq:5}
\end{equation} We refer to (\ref{eq:5}) as the probabilistic second-moment definition of $\Qvec$ in the rest of the paper. For an isotropic system, where all directions in space are equally likely, the function $\psi$ is a constant i.e.
\begin{equation}
\psi(\pvec) = \frac{1}{4\pi} \quad \forall\pvec\in S^2
\label{eq:4}
\end{equation} and consequently, $\Qvec=0$. On the other hand, for a perfectly aligned system where the nematic molecules identically align along a pair of unit-vectors $\left(\evec, -\evec\right)$, the function $\psi$ is given by -
\begin{equation}
\psi(\pvec) = \frac{1}{8\pi} \left(\delta_{S^2}\left(\evec,\pvec\right) +
\delta_{S^2}\left(-\evec,\pvec\right)\right) \label{eq:stat1}
\end{equation}where $\delta_{S^2}$ is the Dirac-delta function on $S^2$ and the corresponding $\Qvec$-tensor is $\Qvec=\left(\evec\otimes\evec - \frac{1}{3}\mathbf{I}\right)$.

It follows directly from (\ref{eq:5}) that $\Qvec$ is a
symmetric, traceless $3\times 3$ matrix. From the spectral
decomposition theorem, we can express $\Qvec$ in terms of a triad
of orthonormal eigenvectors, $\left\{\evec_1, \evec_2,
\evec_3\right\}$, and corresponding eigenvalues,
$\left\{\la_1,\la_2,\la_3\right\}$, subject to the tracelessness
condition $\sum_i \la_i = 0$.
\begin{equation}
\Qvec = \la_1 \evec_1 \otimes \evec_1 + \la_2 \evec_2 \otimes
\evec_2 +\la_3 \evec_3 \otimes \evec_3 \quad \textrm{where $\sum_i
\lambda_i = 0$.} \label{eq:6}
\end{equation}
Nematic liquid crystals are broadly classified into three main
families according to the eigenvalue structure of $\Qvec$. A
nematic liquid crystal is called isotropic when it has three equal
eigenvalues (the tracelessness condition implies that $\Qvec=0$),
uniaxial when it has a pair of equal non-zero eigenvalues and
biaxial when it has three distinct eigenvalues \cite{dg,
newtonmottram}. The eigenvalues measure the degree of
orientational ordering along the corresponding eigenvectors and
one can verify that the eigenvalues are constrained by the
following inequalities -
\begin{equation}
-\frac{1}{3} \leq \lambda_i =
\int_{S^2}\left(\pvec\cdot\evec_i\right)^2 \psi(\pvec)~d\pvec -
\frac{1}{3} \leq \frac{2}{3},~ \quad ~i = 1\ldots 3 \label{eq:7}
\end{equation} since
$0\leq
\int_{S^2}\left(\pvec\cdot\evec_i\right)^2~\psi(\pvec)~d\pvec\leq
1$. If the eigenvalue $\la_i=-\frac{1}{3}$ (the lower bound in
(\ref{eq:7})), then the function $\psi$ is supported on the great
circle perpendicular to the corresponding eigenvector $\evec_i$.
On the other hand, if $\la_i=\frac{2}{3}$ (the upper bound in
(\ref{eq:7})), then $\psi$ is as in (\ref{eq:stat1}) and the
liquid crystal molecules line up perfectly along the pair of
unit-vectors $\left(\evec_i, -\evec_i\right)$. For example, the
liquid crystal state, $\left(\la_1,\la_2,\la_3\right) =
\left(\frac{2}{3}, -\frac{1}{3},-\frac{1}{3}\right)$, is an
example of a perfectly ordered state along the eigenvector
$\evec_1$ and exhibits \emph{prolate} uniaxial symmetry whereas
the liquid crystal state, $\left(\la_1,\la_2,\la_3\right) =
\left(-\frac{1}{3}, \frac{1}{6},\frac{1}{6}\right)$, has the
molecules aligned in the plane orthogonal to $\evec_1$ and
exhibits \emph{oblate} uniaxial symmetry \cite{forest1}. From a
physical point of view, the limiting values, $\la_i =
-\frac{1}{3}$ or $\la_i=\frac{2}{3}$, represent unrealistic
configurations.

The $\Qvec$-tensor order parameter can be expressed more concisely
in terms of just a pair of eigenvectors
$\left(\evec_1,\evec_2\right)$ and a pair of scalar order
parameters $\left(s,r \right)$ as shown below
\cite{newtonmottram}.
\begin{equation}
\Qvec = s \left(\evec_1 \otimes \evec_1 -
\frac{1}{3}\mathbf{I}\right) + r\left(\evec_2\otimes\evec_2 -
\frac{1}{3}\mathbf{I}\right), \label{eq:8}
\end{equation} where $s,r$ are linear combinations of the
$\la_i$'s given by
\begin{eqnarray}&&
s = \la_1 - \la_3 = 2\lambda_1 + \la_2 \nonumber \\ && r = \la_2 -
\la_3 = \la_1 + 2\la_2 . \label{eq:9}
\end{eqnarray} The constraints (\ref{eq:7}) directly translate
into constraints for the scalar order parameters $(s,r)$ in
(\ref{eq:9}) and necessarily imply that $(s,~r)$ take values
inside or on the boundary of the \emph{physical triangle},
$T_\psi$, illustrated in Figure~\ref{fig:1}. On each of the
boundary segments of $T_\psi$, one of the eigenvalues $\la_i$
necessarily attains the lower bound in (\ref{eq:7}). For example,
on the boundary segment $s+r=1$, we have $\la_3 = - \frac{1}{3}$.
Similarly, every vertex of $T_\psi$ represents a state of perfect
alignment along of the eigenvectors of $\Qvec$. For example, the
vertex $(s,~r) = (1,0)$ represents a state of perfect alignment
along the eigenvector $\evec_1$. We call $T_\psi$, the physical triangle, on the grounds that the scalar order parameters are appropriately bounded (less than unity) inside $T_\psi$ and the boundary represents physically unrealistic liquid crystal configurations.

For definiteness, we can assume a
specific ordering of the eigenvalues such as $\la_3 \leq \la_2
\leq \la_1$. Then $\la_1$ is necessarily non-negative and $\la_3$
is necessarily non-positive and the constraints (\ref{eq:7})
require $(s,~r)$ to take values inside a subset of $T_\psi$, which
is referred to as a \emph{fundamental domain} $T_f$ defined below
\begin{equation}
T_f = \left\{ (s,~r); 0\leq s\leq 1,~ 0\leq r\leq \min\left\{s,
1-s\right\} \right\} \subset T_\psi. \label{eq:fd}
\end{equation}
Analogous remarks apply to the other five possibilities for the
ordering of the eigenvalues.

The Maier-Saupe theory is a mean-field theory for uniaxial nematic
liquid crystals \cite{dg, palffy}. The Maier-Saupe free energy has
two contributions -
\begin{equation}
I_{MS}[\psi] = \int_{S^2}\psi(\pvec) \log \psi(\pvec)~d\pvec -
\frac{1}{2}U(T)S^2 \label{eq:maier-saupe}
\end{equation} where $U(T)$ accounts for the intermolecular
interactions and is temperature-dependent and $S$ is the uniaxial
scalar order parameter. A standard minimization procedure for
$I_{MS}$ yields a self-consistent equation for the equilibrium
order parameter, $S(T)$, as a function of the temperature. For
high temperatures, the isotropic phase $S=0$ is the global energy
minimizer whereas for temperatures below a certain critical
temperature $T_c$, the nematic phase is globally stable and the
Maier-Saupe theory predicts a first-order nematic-isotropic phase
transition at the critical temperature $T_c$.

\begin{figure}
[hp]

\begin{center}
\includegraphics[width=3 in, height=3 in]{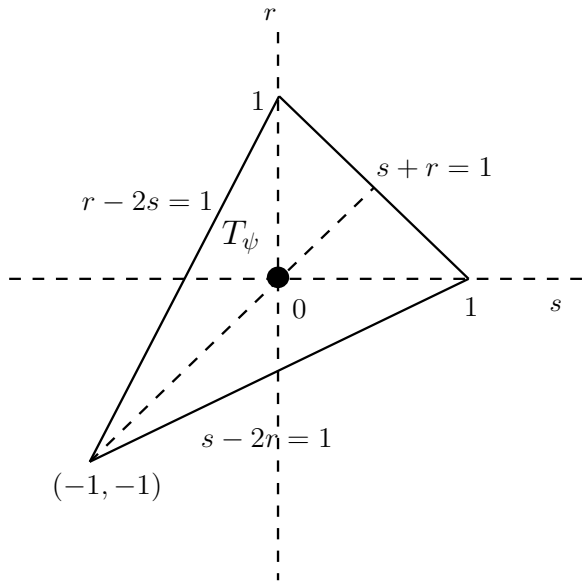}
\caption{The physical triangle $T_{\psi}$.
The origin $(s,~r)=(0,~0)$ represents the isotropic liquid state; the dotted lines $U = \left\{ (s,r)\in T_{\psi}:\textrm{$s=0$ or $r=0$ or $s=r$}\right\}\setminus (0,0)$ represent uniaxial states and
$B = T_{\psi}\setminus \left\{U \cup (0,0)\right\}$ is the biaxial region.}\label{fig:1}
\end{center}
\end{figure}

\section{The Landau--De Gennes Theory}
\label{sec:3}

In this section, we study equilibrium nematic configurations
within the continuum Landau-De Gennes theory. The Landau--De
Gennes theory describes the state of a nematic liquid crystal by a
macroscopic order parameter - the $\Qvec$-tensor order parameter,
which is defined in terms of macroscopic quantities such as the
magnetic susceptibility \cite{dg, Lin}. Within the Landau-De
Gennes framework, $\Qvec$ is a symmetric, traceless $3\times 3$
matrix with no a priori bounds on the eigenvalues; in particular,
the eigenvalues are not constrained by the inequalities
(\ref{eq:7}).

We work in a three-dimensional setting, take the domain
$\Omega\subset \Rr^3$ to be bounded and simply-connected with
smooth boundary. Let $S_0$ denote the space of symmetric,
traceless $3\times 3$ matrices
\begin{equation}
S_0= \left\{ \Qvec \in \mathbf{M}^{3\times 3};~
\Qvec_{\alpha\beta} = \Qvec_{\beta\alpha},~ \Qvec_{\alpha\alpha} =
0 \right\} \label{eq:32}.
\end{equation} The corresponding matrix norm is defined to be
\cite{Lin}
\begin{equation}
\label{eq:matrixnorm} |\Qvec|^2 =
\Qvec_{\alpha\beta}\Qvec_{\alpha\beta} \quad \alpha,\beta = 1
\ldots 3 \end{equation} and the Einstein summation convention is
used here and elsewhere in the paper. We define our admissible
space $\Acal$ to be
\begin{equation}
\mathcal{A} = \left\{ \Qvec \in W^{1,2}\left(\Omega, S_0\right);~
\Qvec = \Qvec_0 \quad \textrm{on $\partial\Omega$}\right\}
\label{eq:31}
\end{equation}
where the Sobolev space $W^{1,2}\left(\Omega, S_0\right)$ is given
by \cite{gilbarg, evans}
\begin{equation}
W^{1,2}\left(\Omega, S_0\right) = \left\{ \Qvec\in S_0;~
\int_{\Omega}\left|\Qvec\right|^2 + |\grad \Qvec |^2~dV < \infty
\right\} \label{eq:33}
\end{equation} and $\Qvec_0$ is a smooth, \emph{physically
realistic} boundary condition in the sense that its scalar order
parameters, $(s,~r)$ in (\ref{eq:9}), are inside the physical
triangle $T_{\psi}$.

In the absence of external fields and surface energies, the
Landau-De Gennes energy functional, $I_{LG}$, is given by
\begin{equation} I_{LG}\left[\Qvec\right] = \int_{\Omega}
f_B\left(\Qvec\right) + L|\grad \Qvec|^2~dV. \label{eq:12}
\end{equation}
Here $f_B$ is the bulk energy density, $L>0$ is a
material-dependent elastic constant, $$|\grad \Qvec|^2 =
\Qvec_{ij,k}\Qvec_{ij,k} \quad i,j,k=1\ldots 3 $$ is the elastic
energy density where $\Qvec_{ij,k} = \frac{\partial
\Qvec_{ij}}{\partial \rvec_k}$ denote the first partial
derivatives of $\Qvec$.

\textit{Comment: We work with the simplest form of the elastic
energy density - the one-constant elastic energy density in
(\ref{eq:12}). There are more general forms of the elastic energy
density, see \cite{newtonmottram, gartland}.}

The bulk energy density $f_B$ is a scalar function of $\Qvec$ and
it dictates the preferred liquid crystal phase - isotropic,
uniaxial or biaxial. We work with the simplest form of $f_B$ that
allows for a first-order nematic-isotropic phase transition. This
simplest form of $f_B$ is a quartic polynomial in $\Qvec$ as shown
below
\begin{eqnarray}
&& f_B\left(\Qvec\right) = \frac{a}{2}\textrm{tr}\Qvec^2 -
\frac{b}{3}\textrm{tr} \Qvec^3 + \frac{c}{4}\left(\textrm{tr}
\Qvec^2 \right)^2 ~\textrm{with}\label{eq:13}
\\ && \textrm{tr}\Qvec^2 = \Qvec_{\alpha\beta}\Qvec_{\alpha\beta}
,~ \textrm{tr}\Qvec^3 = \Qvec_{\alpha\beta}\Qvec_{\beta
\gamma}\Qvec_{\gamma \alpha} ~\alpha,\beta,\gamma =1 \cdots 3.
\label{eq:13a}
\end{eqnarray}Here $b, c>0$ are material--dependent bulk
constants, independent of the temperature, whereas the parameter
$a$ scales linearly with the absolute temperature and is given by
\begin{equation}
a = \alpha \left( T - T^*\right) \label{eq:a}
\end{equation}
where $\alpha>0$ and $T^*$ is a characteristic liquid crystal
temperature \cite{dg, newtonmottram}.

The equilibrium, physically observable configurations correspond
to global or local minimizers of the Landau-De Gennes energy
functional, $I_{LG}$, subject to the imposed boundary conditions.
In what follows, we first consider spatially homogeneous cases in
Section~\ref{sec:bulk} and then study global energy minimizers in
spatially inhomogeneous cases in Sections~\ref{sec:oc} and
\ref{sec:max}.

\subsection{The Bulk Energy Density}
\label{sec:bulk}

Our first proposition concerns the stationary points of the bulk
energy density. Proposition~\ref{prop:uniaxial} is known in the
literature \cite{forest2} and we give an alternative proof here
for completeness.

\begin{prop}\cite{ball}
The stationary points of the bulk energy density, $f_B$ in
(\ref{eq:13}), are given by either uniaxial or isotropic
$\Qvec$-tensors of the form
\begin{equation}
\Qvec = d\left(\nvec\otimes\nvec - \frac{1}{3}\mathbf{I}\right)
\label{eq:uniaxial}
\end{equation} where $d$ is a scalar order parameter and $\nvec$
is one of the eigenvectors of $\Qvec$ in (\ref{eq:6}).
On comparing (\ref{eq:uniaxial}) with (\ref{eq:8}), we see that
when $\nvec=\evec_1$, the parameter $d=s$ and the scalar order
parameter $r=0$. Similarly, when $\nvec=\evec_2$, the parameter
$d=r$ and $s=0$ whereas when $\nvec=\evec_3$, the order parameters
$s,r$ in (\ref{eq:8}) are equal and are given by $s=r = -d$. \label{prop:uniaxial}
\end{prop}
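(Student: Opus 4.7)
The plan is to write down the Euler--Lagrange equations for stationary points of $f_B$ on the linear subspace $S_0$ of symmetric traceless matrices and then exploit the fact that $\Qvec$ commutes with $\Qvec^2$ to reduce the matrix equation to a single scalar quadratic constraint on the eigenvalues.

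First I would differentiate $f_B$ with respect to $Q_{\alpha\beta}$, handling the traceless constraint $Q_{\alpha\alpha}=0$ by introducing a scalar Lagrange multiplier $\mu$. Using $\partial_{Q_{\alpha\beta}}\textrm{tr}\Qvec^2 = 2Q_{\alpha\beta}$ and $\partial_{Q_{\alpha\beta}}\textrm{tr}\Qvec^3 = 3(\Qvec^2)_{\alpha\beta}$, the stationarity condition reads
\begin{equation*}
a\,Q_{\alpha\beta} - b\,(\Qvec^2)_{\alpha\beta} + c\,|\Qvec|^2\, Q_{\alpha\beta} = \mu\,\delta_{\alpha\beta}.
\end{equation*}
Taking the trace and using $\textrm{tr}\,\Qvec=0$ and $\textrm{tr}\,\Qvec^2 = |\Qvec|^2$ pins down the multiplier as $\mu = -\frac{b}{3}|\Qvec|^2$, leaving the intrinsic equation
\begin{equation*}
a\,\Qvec + c\,|\Qvec|^2\,\Qvec - b\,\Qvec^2 + \frac{b}{3}|\Qvec|^2\,\mathbf{I} = 0.
\end{equation*}

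Second, I would observe that $\Qvec$ and $\Qvec^2$ manifestly commute and are both symmetric, so by the spectral theorem they are simultaneously diagonalizable in a common orthonormal eigenbasis $\{\evec_1,\evec_2,\evec_3\}$ of $\Qvec$, with eigenvalues $\lambda_1,\lambda_2,\lambda_3$ as in (\ref{eq:6}). Restricting the above matrix equation to this basis, every eigenvalue $\lambda_i$ must satisfy one and the same scalar quadratic
\begin{equation*}
-b\,\lambda^2 + \bigl(a + c|\Qvec|^2\bigr)\,\lambda + \tfrac{b}{3}|\Qvec|^2 = 0.
\end{equation*}

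Third, since a quadratic polynomial has at most two distinct roots, the three eigenvalues $\lambda_1,\lambda_2,\lambda_3$ take at most two distinct values. Combined with the tracelessness constraint $\sum_i \lambda_i = 0$, an elementary case analysis shows that either all three eigenvalues vanish (so $\Qvec=0$, the isotropic case), or exactly one eigenvalue differs from the other two, say $\lambda_i = \alpha$ for two indices and $\lambda_j = -2\alpha$ for the remaining index $j$. A direct rearrangement $\Qvec = \sum_k \lambda_k\evec_k\otimes\evec_k$ then yields the uniaxial representation
\begin{equation*}
\Qvec = d\Bigl(\nvec\otimes\nvec - \tfrac{1}{3}\mathbf{I}\Bigr), \qquad d = -3\alpha, \quad \nvec = \evec_j,
\end{equation*}
which is precisely (\ref{eq:uniaxial}). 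The final identifications of $d$ with $s$, $r$, or $-s = -r$ according to the choice $\nvec=\evec_1,\evec_2,\evec_3$ follow by matching coefficients in (\ref{eq:8}).

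The main conceptual step is recognizing that the nonlinear matrix equation decouples into a single scalar equation on the eigenvalues because $\Qvec$ commutes with its square; the quadratic-has-two-roots argument then does all the work with essentially no further computation. The only mild care needed is in the Lagrange-multiplier step, to ensure the constraint on $\textrm{tr}\,\Qvec$ is consistently enforced and that the computed multiplier is compatible with symmetry (which it is, since $\Qvec^2$ is automatically symmetric).
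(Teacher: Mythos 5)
Your proposal is correct and is essentially the paper's argument in matrix form: both reduce stationarity to the observation that all three eigenvalues satisfy one and the same quadratic equation (your $-b\lambda^2 + (a+c|\Qvec|^2)\lambda + \tfrac{b}{3}|\Qvec|^2 = 0$ is equivalent to the paper's system (\ref{eq:17})--(\ref{eq:18})), so at most two can be distinct and tracelessness then forces the isotropic or uniaxial form. The only cosmetic difference is that you derive the eigenvalue equation by diagonalizing the matrix Euler--Lagrange equation and solving for the multiplier explicitly, whereas the paper restricts $f_B$ to the eigenvalues first and eliminates the multiplier by subtracting the equations pairwise.
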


\begin{proof}For a symmetric, traceless matrix $\Qvec$ of the
form (\ref{eq:6}), $\textrm{tr}\Qvec^n = \sum_{i=1}^{3}\la_i^n$
subject to the tracelessness condition so that the bulk energy
density $f_B$ only depends on the eigenvalues $\lambda_1,
\lambda_2$ and $\lambda_3$. Then the stationary points of $f_B$
are given by the stationary points of the function $f:\Rr^3
\rightarrow \Rr$ defined by
\begin{equation}
f\left(\la_1,\la_2,\la_3\right) = \frac{a}{2}\sum_{i=1}^{3}\la_i^2
- \frac{b}{3}\sum_{i=1}^{3}\la_i^3 +
\frac{c}{4}\left(\sum_{i=1}^{3}\la_i^2 \right)^2 -
2\delta\sum_{i=1}^{3}\la_i \label{eq:16}
\end{equation} where we have recast $f_B$ in terms of the eigenvalues
and introduced a Lagrange multiplier $\delta$ for the tracelessness condition.

The equilibrium equations are given by a system of three algebraic
equations
\begin{equation}
\frac{\partial f}{\partial \lambda_i} = 0 \Leftrightarrow a \la_i
- b\la_i^2 + c\left(\sum_{k=1}^{3}\la_k^2 \right)\la_i = 2\delta,
\quad \textrm{for $i=1 \ldots 3$,} \label{eq:17}
\end{equation} together with the tracelessness condition $\sum_i \la_i = 0$. The system (\ref{eq:17}) is equivalent to
\begin{equation}
\left( \la_i - \la_j \right)\left[ a - b\left(\la_i + \la_j
\right) + c \sum_{k=1}^{3}\la_k^2 \right] = 0 \quad 1\leq i< j
\leq 3.\label{eq:18}
\end{equation}
Let $\left\{\lambda_i\right\}$ be a solution of the system
(\ref{eq:17}) with three distinct eigenvalues $\la_1 \neq \la_2
\neq \la_3$.  We consider equation (\ref{eq:18}) for the pairs
$\left(\la_1,\la_2\right)$ and $\left(\la_1,\la_3\right)$. This
yields two equations
\begin{eqnarray}
&& a - b\left(\la_1 + \la_2\right) + c\sum_{k=1}^{3}\la_k^2 = 0
\nonumber \\&& a - b\left(\la_1 + \la_3\right) +
c\sum_{k=1}^{3}\la_k^2 = 0 \label{eq:19}
\end{eqnarray}
from which we obtain
\begin{equation}
-b\left(\la_2 - \la_3 \right) = 0, \label{eq:20}
\end{equation} contradicting our initial hypothesis $\la_2 \neq
\la_3$. We, thus, conclude that a stationary point of $f_B$ must
have at least two equal eigenvalues and therefore correspond to
either an uniaxial or isotropic liquid crystal state. In
particular, there are no biaxial stationary points for the
particular choice of $f_B$ in (\ref{eq:13}).
\end{proof}

By virtue of Proposition~\ref{prop:uniaxial}, it suffices to
consider uniaxial $\Qvec$-tensors of the form $$\Qvec =
s\left(\nvec\otimes \nvec - \frac{1}{3}\mathbf{I}\right) \quad
\nvec\in S^2$$ whilst computing the stationary points of $f_B$.
For such $\Qvec$-tensors, $f_B$ is a quartic polynomial in the
uniaxial scalar order parameter $s$ and the stationary points are
the roots of the algebraic equation given
below
\begin{equation}
\frac{d f_B}{d s} = \frac{1}{27}\left(18as - 6bs^2 + 12cs^3\right)
= 0. \label{eq:f2}
\end{equation}
There are precisely three stationary points;
\begin{equation}
s = 0 \quad \textrm{and $s_{\pm} = \frac{b \pm \sqrt{b^2 - 24
ac}}{4c}$} \label{eq:f3}
\end{equation}
where
\begin{equation}
f_B(0) = 0 \quad \textrm{and} \quad f_B(s_\pm) =
\frac{s_{\pm}^2}{54}\left(9a - b s_{\pm} \right), \label{eq:f4}
\end{equation} and $f_B(s_-) > f_B(s_+)$.
Hence, the global bulk energy minimizer is either the isotropic
state $\Qvec=0$ or the ordered nematic state
 \begin{equation}
\Qvec = s_+\left(\evec\otimes\evec - \frac{1}{3}\mathbf{I}\right)
 \label{eq:uniaxial2}
\end{equation} where $\evec$ is the eigenvector with the largest
eigenvalue.

A natural question is - for which temperature ranges does the
global bulk energy minimizer lie inside the physical triangle
$T_\psi$ i.e. for which temperature regimes does $s_+$, which is
the stable nematic stationary point, take values in the physical
range $$ 0 \leq s_+= \frac{b + \sqrt{b^2 - 24ac}}{4c} \leq 1 ? $$
One can directly verify that $s_+\in\left[0,1\right]$ if and only
if
\begin{equation}
\label{eq:b1} \frac{1}{3}\left(b- 2c\right) \leq a \leq
\frac{b^2}{24c},
\end{equation}
or equivalently, in terms of the absolute temperature $T$ if and
only if
\begin{equation}
\label{eq:b1new} \frac{1}{3\alpha}\left(b- 2 c\right)+T^* \leq T
\leq \frac{b^2}{24\alpha c}+ T^*.
\end{equation} For the common liquid crystal material MBBA, the
values of the characteristic bulk constants are given in the
literature \cite{gartland2,newtonmottram}
\begin{eqnarray}
\label{eq:b5}
&& \alpha = 0.42\times 10^3 J/m^3~^{o}C, ~b = 0.64\times 10^4 J/m^3,~c = 0.35\times 10^4 J/m^3 \nonumber\\
&& T^* = 45^{o}C \quad T_c = 46^{o}C\end{eqnarray} where $T_c$ is
the nematic-isotropic transition temperature. We substitute these
values into (\ref{eq:b1}) and (\ref{eq:b1new}) and find that $s_+
>1$ for $T < 44.52^{o}C$ i.e. $s_+$ moves outside the physical
range within a $2^{o}C$ - neighbourhood of the nematic-isotropic
transition temperature.

We recall that there are three characteristic temperatures
predicted by the quartic form of $f_B$ in (\ref{eq:13}): (i)
$a=0$, below which the isotropic state loses its stability (ii)
the nematic-isotropic transition temperature, $a=\alpha(T_c - T^*)
= \frac{b^2}{27c}$, for which $f_B(s_+) = f_B(0)$ and (iii) $a =
\frac{b^2}{24c}$ above which the ordered nematic stationary points
are no longer defined in (\ref{eq:f3}). We provide a pictoral
representation for the stationary points of $f_B$
for ease of comparison with $T_\psi$. We define the \emph{bulk
triangle}, $\triangle(T)$, to be the convex hull of the stationary
points of $f_B$ in the order-parameter $(s,~r)$ - plane. For
$-\alpha T^* \leq a < -\frac{b^2}{3c}$, $\triangle(T)$ is an
isosceles triangle with its vertices at the points
$\left\{(2|s_-|,0),~(0,2|s_-|),~(-2|s_-|,-2|s_-|)\right\}$ whereas
for $-\frac{b^2}{3c} \leq a \leq \frac{b^2}{24c}$, $\triangle(T)$
is an isosceles triangle with its vertices at the points
$\left\{(s_+,0),~(0,s_+),~(-s_+,-s_+)\right\}$. For $a >
\frac{b^2}{24c}$, $\triangle(T)$ collapses to the origin since
$s=0$ is the unique critical point in the high-temperature regime.
In Figures~\ref{fig:2} and \ref{fig:3}, we illustrate
$\triangle(T)$ for all temperature regimes.

\begin{figure}
[hp]
\begin{center}
\includegraphics[width=5 in, height=2.8 in]{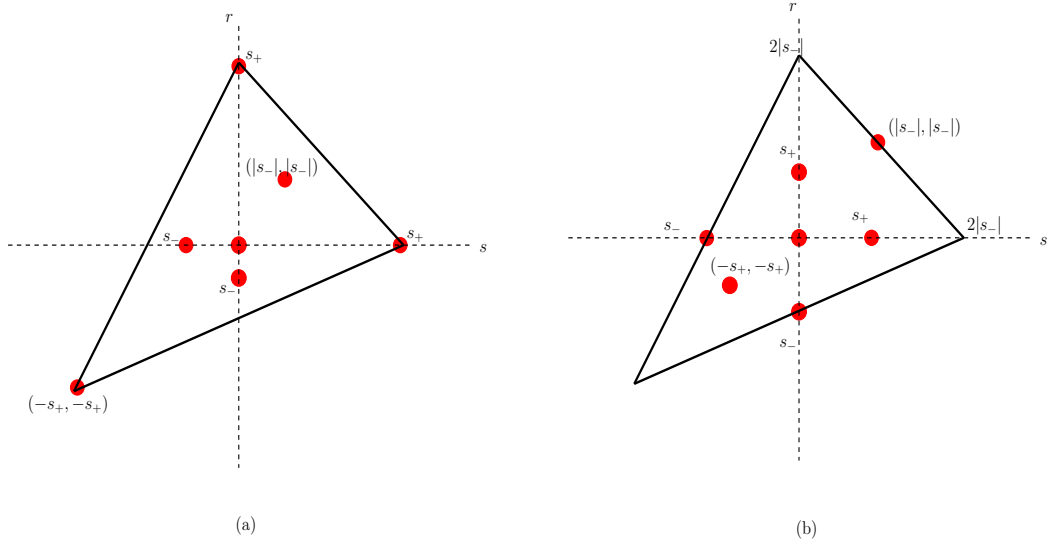}
\caption{(a) The triangle $\triangle(T)$ for
$-\frac{b^2}{3c\alpha}+T^* \leq T < T^*$. The red marked points
label the stationary points of $f_B$ in this temperature regime. (b) The
triangle $\triangle(T)$ for $T <
-\frac{b^2}{3c\alpha}+T^*$.}\label{fig:2}
\end{center}
\end{figure}

\begin{figure}
[hp]
\begin{center}
\includegraphics[width=3 in, height=3 in]{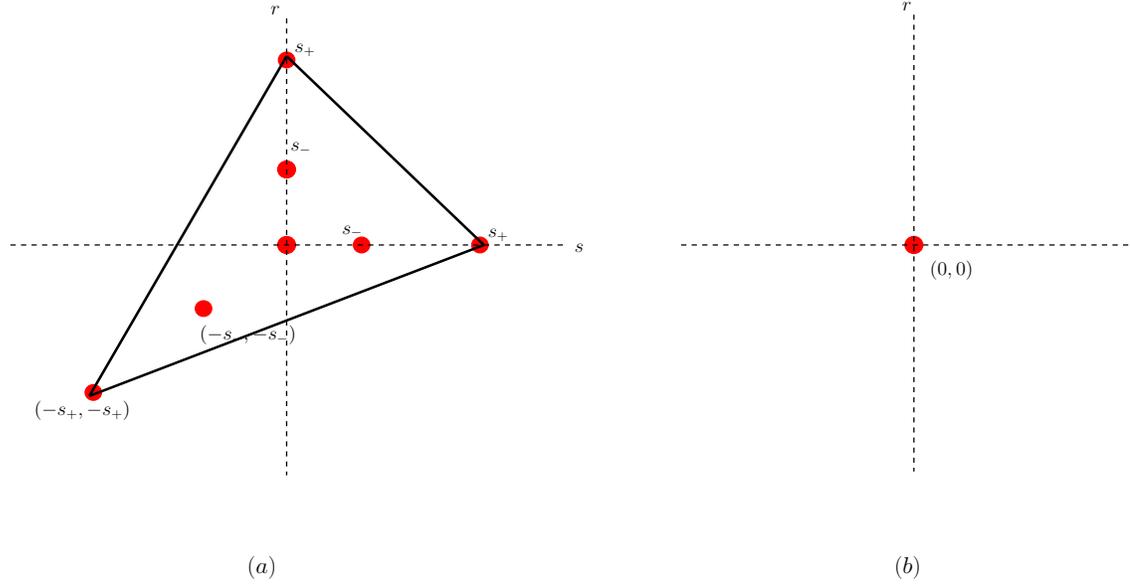}
\caption{The triangle $\triangle(T)$ for $T^*\leq T <
\frac{b^2}{24\alpha c}+ T^*$. The stationary point $s_{-} >0$ in
this temperature regime. (b) The triangle $\triangle(T)$ shrinks
to the origin for $T>\frac{b^2}{24\alpha c}+T^*$, since the
isotropic state $(s,~r)=(0,~0)$ is the unique stationary point in
this temperature regime.}\label{fig:3}
\end{center}
\end{figure}

\subsection{The One--constant Elastic Energy Density}
\label{sec:oc}

In this section, we study global minimizers of the Landau-De
Gennes energy functional, $I_{LG}$ in (\ref{eq:12}),
$$I_{LG}\left[\Qvec\right] = \int_{\Omega} f_B\left(\Qvec\right) +
L|\grad \Qvec|^2~dV$$ and obtain explicit bounds for the
equilibrium scalar order parameters, $(s,~r)$ in (\ref{eq:9}).
These bounds quantify the effect of the elastic energy density on
the bulk energy minima and can also be compared to the
probabilistic bounds in (\ref{eq:7}).

\subsubsection{Existence and regularity of minimizers}
There exists a global minimizer, $\Qvec^*$, of $I_{LG}$ in the
admissible class $\mathcal{A}$, where $\Acal$ has been defined in
(\ref{eq:31}). This is a ready consequence of the direct methods
in the calculus of variations \cite{dacorogna, gartland}. Indeed,
one can check that $I_{LG}$ satisfies the following coerciveness
estimate \cite{gartland}
\begin{equation}
I_{LG}[\Qvec] \geq \alpha_c ||\Qvec||_{W^{1,2}(\Omega)}^2
\label{eq:34}
\end{equation} where $\alpha_c>0$ and the $W^{1,2}$-norm,
$||\Qvec||_{W^{1,2}(\Omega)}$, is given by
$$||\Qvec||_{W^{1,2}(\Omega)} = \left(\int_{\Omega} |\Qvec|^2 + |\grad \Qvec|^2~dV \right)^{1/2}.$$
The Landau-De Gennes energy density is convex in the gradient
$\grad\Qvec$ and therefore $I_{LG}$ is weakly lower semicontinuous
\cite{evans}. The coerciveness and weak lower semicontinuity of
$I_{LG}$ guarantee that the infimum energy is actually achieved
i.e. there exists a $\Qvec^*\in \mathcal{A}$ with the property
\begin{equation}
I_{LG}[\Qvec^*] = \inf_{\Qvec\in\mathcal{A}}
I_{LG}[\Qvec].\label{eq:35}
\end{equation}

The global minimizer $\Qvec^*$ is a weak solution of the
corresponding Euler-Lagrange equations, which is a system of
nonlinear elliptic partial differential equations as shown below -
\begin{equation}
2 L\Delta \Qvec_{\alpha\beta} = a \Qvec_{\alpha\beta} +
b\left(\frac{1}{3}\textrm{tr}\Qvec^2 \delta_{\alpha\beta} -
\Qvec_{\alpha p}\Qvec_{p \beta}\right) +
c\left(\Qvec_{pq}\right)^2\Qvec_{\alpha\beta} \qquad \alpha,\beta
= 1 \ldots 3. \label{eq:36}
\end{equation} We use standard results from the theory of elliptic
partial differential equations to deduce that $\Qvec^*$ is
actually a classical solution of the system (\ref{eq:36}) and
$\Qvec^*$ is smooth and analytic everywhere in $\Omega$
\cite{gilbarg}. Given smooth boundary conditions,
$\Qvec^*$ is also smooth up to the boundary.

\subsubsection{Upper bounds for the order parameters}

The global minimizer $\Qvec^*$ can be expressed in terms of a pair
of eigenvectors $(\nvec^*,\mvec^*)$ and the scalar order
parameters $(s^*,~r^*)$, as in (\ref{eq:8}),
\begin{equation}
\Qvec^* = s^*\left(\nvec^*\otimes\nvec^* -
\frac{1}{3}\mathbf{I}\right) + r^*\left(\mvec^*\otimes\mvec^* -
\frac{1}{3}\mathbf{I}\right) \label{eq:new}
\end{equation}
and
\begin{equation}
|\Qvec^*|^2 = \frac{2}{3}\left(s{^{*}}^2 + r{^*}^2 - s^*r^*
\right). \label{eq:38}
\end{equation}
We partition the $(s,~r)$-plane into three regions: (a) $R_1 =
\left\{(s,~r)|s,r \geq 0\right\}$ - the top quadrant, (b) $R_2 =
\left\{(s,~r)| s \leq 0; ~ r \geq s \right\}$ and (c) $R_3 =
\left\{ r\leq 0; ~ r\leq s \right\}$. In $R_1$, we have the
inequalities
\begin{equation}
\frac{1}{6}\left(s^* + r^* \right)^2 \leq |\Qvec^*|^2 \leq
\frac{2}{3}\left(s^* + r^* \right)^2. \label{eq:39}
\end{equation} Similarly, for $R_2$, we have that
\begin{equation}
\frac{1}{6}\left(r^* - 2s^*\right)^2 \leq |\Qvec^*|^2 \leq
\frac{2}{3}\left(r^* - 2s^* \right)^2 \label{eq:40}
\end{equation} and for $R_3$,
\begin{equation}
\frac{1}{6}\left(s^* - 2r^*\right)^2 \leq |\Qvec^*|^2 \leq
\frac{2}{3}\left(s^* - 2r^*\right)^2. \label{eq:41}
\end{equation}

For every $\eta>0$, we define the bounded region $\Sigma_{\eta} =
\left\{ (s^*,r^*)~;~\left|\Qvec^*\right| \leq \eta \right\}$ in
the $(s,r)$-plane. Let $T_{\eta}$ be the isosceles triangle in the
$(s,r)$-plane with its vertices at the points -
$\left\{\left(\eta, 0\right), \left(0,\eta\right), \left(-\eta,
-\eta \right)\right\}$. Then it follows immediately from
(\ref{eq:39}), (\ref{eq:40}) and (\ref{eq:41}) that
\begin{equation}
T_{\sqrt{\frac{3}{2}}~\eta} \subset \Sigma_{\eta}\subset
T_{\sqrt{6}~\eta} \label{eq:region}
\end{equation} so that $T_{\sqrt{6}\eta} \subseteq T_{\psi}$ necessarily implies that
$\Sigma_{\eta} \subset T_\psi$. Our first result in this section
is an explicit upper bound for the norm of a global energy
minimizer in the low-temperature regime $a \leq \frac{b^2}{24c}$
and as the preceding discussion shows, this upper bound allows us
to define the admissible domain for the equilibrium scalar order
parameters.


\begin{thm} \label{thm:1}
Let $\Qvec^*\in \mathcal{A}$ be a global minimizer for the energy
functional $I_{LG}$, where $\mathcal{A}$ and $I_{LG}$ have been
defined in (\ref{eq:31}) and (\ref{eq:12}) respectively. We work
in the temperature regime $a\leq \frac{b^2}{24c}$ and make the
following assumption about the boundary condition $\Qvec_0$,
\begin{equation}
|\Qvec_0(\rvec)|< \min\left\{\frac{b + \sqrt{b^2 -
24ac}}{4\sqrt{6}c},~\frac{1}{\sqrt{6}}\right\} \quad \rvec\in
\partial\Omega, \label{eq:42}
\end{equation} where $|\Qvec|$ has been defined in (\ref{eq:matrixnorm}).
The condition (\ref{eq:42}) is equivalent to requiring that the
boundary order parameters are contained inside $T_{\psi}$ and the
bulk triangles $\triangle(T)$ defined in Section~3.1. Then
$\Qvec^*$ obeys the following global upper bound on
$\overline{\Omega}$ -
\begin{equation}
|\Qvec^*(\rvec)| \leq \frac{b + \sqrt{b^2 - 24ac}}{2\sqrt{6}c}
~~~\textrm{for $\rvec \in \overline{\Omega}$.} \label{eq:43}
\end{equation}
\end{thm}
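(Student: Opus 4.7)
My plan is to reduce the vector-valued problem to a scalar maximum principle applied to $\varphi(\rvec) := |\Qvec^*(\rvec)|^2$. Since $\Qvec^*$ is a smooth classical solution of the Euler--Lagrange system (\ref{eq:36}), differentiating gives
$$\tfrac{1}{2}\Delta \varphi \;=\; \Qvec^*_{\alpha\beta}\Delta\Qvec^*_{\alpha\beta} + |\grad\Qvec^*|^2,$$
and I can substitute (\ref{eq:36}) for $\Delta\Qvec^*_{\alpha\beta}$. On contracting with $\Qvec^*_{\alpha\beta}$, the $\delta_{\alpha\beta}$ piece drops out by tracelessness, the linear term contributes $a\varphi$, the cubic term contributes $b\,\textrm{tr}(\Qvec^*)^3$, and the quartic term contributes $c\varphi^2$, producing an exact identity for $\Delta\varphi$ in terms of $\varphi$, $\textrm{tr}(\Qvec^*)^3$, and $|\grad\Qvec^*|^2$.

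The one algebraic input I need is the sharp pointwise inequality
$$|\textrm{tr}\Qvec^3| \;\leq\; \tfrac{1}{\sqrt{6}}\,|\Qvec|^3$$
valid for every symmetric traceless $3\times 3$ matrix $\Qvec$. This follows from the identity $\textrm{tr}\Qvec^3 = 3\lambda_1\lambda_2\lambda_3$ (obtained using $\sum\lambda_i = 0$) together with a short eigenvalue estimate, with equality attained precisely in the uniaxial case. Combining this bound with the identity above and discarding the nonnegative gradient term gives the scalar differential inequality
$$L\,\Delta\varphi \;\geq\; c\,\varphi^2 \;-\; \tfrac{b}{\sqrt{6}}\,\varphi^{3/2} \;+\; a\,\varphi \;=\; \varphi\,g\!\left(\sqrt{\varphi}\right),$$
where $g(u) := cu^2 - \tfrac{b}{\sqrt{6}}u + a$.

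Setting $M := \frac{b+\sqrt{b^2-24ac}}{2\sqrt{6}c}$, the larger root of $g$ in the temperature regime $a \leq b^2/(24c)$, I observe that $g(u)>0$ for every $u>M$, so the right-hand side of the differential inequality is strictly positive wherever $|\Qvec^*|>M$. The hypothesis (\ref{eq:42}) on $\Qvec_0$ gives $\varphi = |\Qvec_0|^2 < M^2/4$ on $\partial\Omega$. Now I run the standard maximum principle argument: if $\max_{\overline\Omega}\varphi > M^2$, then by continuity and the boundary bound the maximum is attained at an interior point $\rvec_0$, at which $\Delta\varphi(\rvec_0) \leq 0$ by the second-derivative test; but the differential inequality forces $L\Delta\varphi(\rvec_0) > 0$, a contradiction. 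Hence $\varphi \leq M^2$ on $\overline\Omega$, which is exactly (\ref{eq:43}).

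The main obstacle I anticipate is establishing the inequality $|\textrm{tr}\Qvec^3|\leq |\Qvec|^3/\sqrt{6}$ with the sharp constant $1/\sqrt{6}$, since any weaker constant would alter the discriminant in the quadratic for $\sqrt{\varphi}$ and destroy the precise match with the root structure of $f_B$ recorded in (\ref{eq:f3}). Beyond that, the argument is a clean one-step scalar maximum principle, and no vector-valued maximum principle for the elliptic system (\ref{eq:36}) is actually required.
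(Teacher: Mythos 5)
Your argument is correct, but it is not the route the paper takes for Theorem~\ref{thm:1}. The paper's own proof is variational: assuming the set $\Omega^*=\{\rvec:|\Qvec^*(\rvec)|>\Gamma\}$ has positive measure, it builds the radial truncation $\widetilde{\Qvec}=\frac{\Gamma}{|\Qvec^*|}\Qvec^*$ on $\Omega^*$, checks by the explicit computation (\ref{eq:46}) that the elastic term does not increase, and shows via (\ref{eq:47})--(\ref{eq:50}) that the bulk term strictly decreases, contradicting minimality. Your proof is instead the scalar maximum principle applied to $\varphi=|\Qvec^*|^2$, which is precisely the technique the paper deploys elsewhere --- in Theorem~\ref{thm:2} for the high-temperature regime and in Proposition~\ref{prop:fb} for general polynomial bulk energies (your bound is essentially Proposition~\ref{prop:fb} specialized to $n=4$, where the largest root of $K$ is exactly $\Gamma$). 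Both routes hinge on the same algebraic input, Lemma~\ref{lem:1}, equivalently $\textrm{tr}\Qvec^3\leq|\Qvec|^3/\sqrt{6}$ with the sharp uniaxial constant, and both identify $\Gamma$ as the largest root of $g(u)=cu^2-\frac{b}{\sqrt 6}u+a$; your computation of $L\Delta|\Qvec^*|^2=2L|\grad\Qvec^*|^2+a|\Qvec^*|^2-b\,\textrm{tr}\Qvec^{*3}+c|\Qvec^*|^4$ reproduces (\ref{eq:max4}), and the boundary hypothesis (\ref{eq:42}) indeed gives $|\Qvec_0|<\Gamma/2$, so the interior-maximum contradiction goes through. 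The trade-off: your PDE argument requires $\Qvec^*$ to be a classical ($C^2$) solution of (\ref{eq:36}), which the paper has already secured by elliptic regularity, and in return it proves the bound for \emph{every} sufficiently regular critical point, not only global minimizers; the paper's comparison-map argument needs no regularity beyond $W^{1,2}$ but exploits global minimality in an essential way. One cosmetic caution: in your contraction the cubic term enters with a minus sign, $-b\,\textrm{tr}\Qvec^{*3}$, so the bound you actually need is the one-sided estimate $\textrm{tr}\Qvec^3\leq|\Qvec|^3/\sqrt{6}$ (implied by the two-sided form you state).
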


\begin{proof} We assume that the contrary holds
i.e. the subset $\Omega^* = \left\{\rvec\in \Omega;~
|\Qvec^*(\rvec)| > \frac{b + \sqrt{b^2 -
24ac}}{2\sqrt{6}c}\right\}\subset \Omega$ has positive measure.
The subset $\Omega^*$ clearly does not intersect $\partial\Omega$
since the boundary condition $\Qvec_0$ obeys this upper bound from
assumption (\ref{eq:42}).

We define a perturbation, $\widetilde{\Qvec}$, of the global
minimizer $\Qvec^*$ as follows --
\begin{equation}
  \label{eq:44}
  \widetilde{\Qvec}(\rvec) =
  \begin{cases}
    \Qvec^*(\rvec), ~ \rvec \in \Omega\setminus\Omega^*,\\
      \\
 \frac{\Gamma}{|\Qvec^*(\rvec)|} \Qvec^*(\rvec), ~ \rvec \in
 \Omega^*
  \end{cases}
\end{equation} where
\begin{equation}
\Gamma = \frac{b + \sqrt{b^2 - 24ac}}{2\sqrt{6}c}.
\label{eq:alpha}
\end{equation} We note from (\ref{eq:f3}) that
\begin{equation}
\label{eq:alpha2} \Gamma = \sqrt{\frac{2}{3}}s_+ \end{equation}by
definition. It is evident from (\ref{eq:44}) that
$\widetilde{\Qvec}$ agrees with $\Qvec^*$ everywhere outside
$\Omega^*$ and hence belongs to our admissible space. Moreover,
$\widetilde{\Qvec}$ has constant norm on the set $\Omega^*$ i.e.
$|\widetilde{\Qvec}(\rvec)| = \Gamma$ for $\rvec\in \Omega^*$.

 We obtain an upper bound for the free energy difference
 \begin{equation}
 I_{LG}[\widetilde{\Qvec}] - I_{LG}[\Qvec^*] =
 \int_{\Omega^*} f_B(\widetilde \Qvec) + L|\grad\widetilde\Qvec|^2 -
\left(f_B\left(\Qvec^*\right)+ L|\grad\Qvec^*|^2\right) ~ dV
\label{eq:45}
\end{equation} where $f_B$ is as in (\ref{eq:13}).

We can explicitly compute $\left|\grad\widetilde{\Qvec}\right|^2$
as shown below ,
\begin{equation}
\left|\grad\widetilde{\Qvec}(\rvec)\right|^2 =
\left(\frac{\Gamma}{|\Qvec^*(\rvec)|}\right)^2\left(|\grad\Qvec^*|^2
-
\frac{1}{|\Qvec^*|^2}\left(\Qvec^*_{pq}\Qvec^*_{pq,k}\right)\left(\Qvec^*_{ij,k}\Qvec^*_{ij,k}\right)\right)
\leq |\grad \Qvec^*(\rvec)|^2 \label{eq:46}
\end{equation}
since $ \left(\frac{\Gamma}{|\Qvec^*(\rvec)|}\right)^2 < 1 $ on
$\Omega^* $  by definition.

Consider the function $G:[0,\infty)\rightarrow \Rr$ defined by
\begin{equation}
G(u) = - u^2\left(\frac{a}{2} - \frac{b}{3\sqrt{6}}u +
\frac{c}{4}u^2 \right). \label{eq:48}
\end{equation}
We estimate the bulk energy density difference in terms of the
function $G$ as follows
\begin{eqnarray}
&& f_B(\widetilde\Qvec) - f_B\left(\Qvec^*\right) =
\frac{a}{2}\textrm{tr}\widetilde{\Qvec}^2 -
\frac{b}{3}\textrm{tr}\widetilde{\Qvec}^3 +
\frac{c}{4}\left(\textrm{tr}\widetilde{\Qvec}^2\right)^2 -
\left(\frac{a}{2}\textrm{tr}\Qvec{^*}^2 -
\frac{b}{3}\textrm{tr}\Qvec{^*}^3 +
\frac{c}{4}\left(\textrm{tr}\Qvec{^*}^2\right)^2 \right) = \nonumber \\
&& = \frac{a}{2}\left(\Gamma^2 - |\Qvec^*|^2\right) -
\frac{b}{3}\frac{\textrm{tr}\Qvec{^*}^3}{|\Qvec^*|^3}\left(\Gamma^3
- |\Qvec^*|^3 \right) + \frac{c}{4}\left(\Gamma^4 -
|\Qvec^*|^4\right) \leq G\left(|\Qvec^*|\right) -
G\left(\Gamma\right) \label{eq:47}.
\end{eqnarray} In the last step of (\ref{eq:47}), we use the
equality
$$\textrm{tr}\widetilde{\Qvec}^3 =\Gamma^3 \frac{\textrm{tr}\Qvec{^*}^3}{|\Qvec^*|^3},$$
 $\left(\Gamma^3 - |\Qvec^*|^3 \right)< 0$ on $\Omega^*$ and
the inequality
\begin{equation}
\label{eq:biaxiality1} \frac{\textrm{tr}\Qvec{^*}^3}{|\Qvec^*|^3}
\leq \frac{1}{\sqrt{6}},
\end{equation}
from Lemma~\ref{lem:1}. One can readily verify that $G(u)$ attains
a local maximum for $u=\Gamma$ and $G'(u) < 0$ for all $u >
\Gamma$. Therefore,
\begin{equation}
G\left(|\Qvec^*|\right) - G(\Gamma) < 0,  \label{eq:50}
\end{equation}
since $|\Qvec^*| > \Gamma $ on $\Omega^*$ by definition.

We substitute (\ref{eq:46}), (\ref{eq:47}) and (\ref{eq:50}) into
(\ref{eq:45}) to obtain
\begin{equation}
I_{LG}\left[\widetilde{\Qvec}\right] - I_{LG}[\Qvec^*] < 0,
\label{eq:51}
\end{equation}
contradicting the absolute energy minimality of $\Qvec^*$. We thus
conclude that $\Omega^*$ is empty and
\begin{equation}
|\Qvec^*(\rvec)| \leq \Gamma = \frac{b + \sqrt{b^2 -
24ac}}{2\sqrt{6}c} \label{eq:52}
\end{equation}
for all points $\rvec \in \overline{\Omega}$.
\end{proof}

\begin{lem}
\label{lem:1} Let $\beta(\Qvec)$ be defined as follows -
\begin{equation}
\label{eq:biaxiality2} \beta(\Qvec) = 1 -
6\frac{\left(\textrm{tr}\Qvec^3\right)^2}{\left(\textrm{tr}\Qvec^2\right)^3}
\quad \Qvec \in S_0.
\end{equation} Then $0\leq \beta(\Qvec)\leq 1$.
\end{lem}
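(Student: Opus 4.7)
The plan is to diagonalize $\Qvec$ and use the non-negativity of the discriminant of its characteristic polynomial. Let $\la_1, \la_2, \la_3$ denote the eigenvalues of $\Qvec$; because $\Qvec \in S_0$ is symmetric, these are real, and because $\Qvec$ is traceless, they satisfy $\la_1 + \la_2 + \la_3 = 0$. Setting $e_2 = \la_1\la_2 + \la_2\la_3 + \la_3\la_1$ and $e_3 = \la_1\la_2\la_3$, Newton's identities combined with $\la_1+\la_2+\la_3 = 0$ yield the clean expressions
$$\textrm{tr}\Qvec^2 = -2e_2, \qquad \textrm{tr}\Qvec^3 = 3e_3.$$
The upper bound $\beta(\Qvec) \leq 1$ is then immediate, since $(\textrm{tr}\Qvec^3)^2 \geq 0$ and $(\textrm{tr}\Qvec^2)^3 \geq 0$; the degenerate case $\Qvec = 0$ can be handled by defining $\beta(0)$ by continuity (or simply excluded from the statement).

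The substantive content is the lower bound $\beta(\Qvec) \geq 0$, equivalent to
$$6\bigl(\textrm{tr}\Qvec^3\bigr)^2 \leq \bigl(\textrm{tr}\Qvec^2\bigr)^3.$$
For this I would observe that the characteristic polynomial of $\Qvec$ is the depressed cubic
$$p(x) = x^3 + e_2 x - e_3,$$
whose classical discriminant equals $\prod_{i<j}(\la_i - \la_j)^2 = -4 e_2^3 - 27 e_3^2$. Since the roots $\la_i$ are all real, this discriminant is non-negative, giving $-4 e_2^3 \geq 27 e_3^2$. Substituting $e_2 = -\frac{1}{2}\textrm{tr}\Qvec^2$ and $e_3 = \frac{1}{3}\textrm{tr}\Qvec^3$ turns this into
$$\frac{1}{2}\bigl(\textrm{tr}\Qvec^2\bigr)^3 \geq 3\bigl(\textrm{tr}\Qvec^3\bigr)^2,$$
which is exactly the desired inequality and hence $\beta(\Qvec) \geq 0$.

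The only real step of content is recognizing that the target inequality is nothing more than the discriminant identity for a depressed cubic with three real roots; once that is spotted, the remaining algebra is a few lines of substitution. I expect no serious obstacle. As a sanity check, equality $\beta(\Qvec) = 0$ forces $p$ to have a repeated root, i.e.\ $\Qvec$ must be uniaxial or isotropic, which is consistent with Proposition~\ref{prop:uniaxial} and with the direct computation on $\Qvec = d(\nvec\otimes\nvec - \frac{1}{3}\mathbf{I})$ giving $\textrm{tr}\Qvec^3 = \frac{2}{9}d^3$ and $\textrm{tr}\Qvec^2 = \frac{2}{3}d^2$, so that the ratio in (\ref{eq:biaxiality2}) is exactly $1/6$.
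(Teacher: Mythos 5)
Your proof is correct, but it takes a genuinely different route from the paper's. The paper uses the $(s,r)$ representation (\ref{eq:8}), expands $\textrm{tr}\Qvec^2$ and $\textrm{tr}\Qvec^3$ as polynomials in $s$ and $r$, and verifies by direct (sextic) expansion the identity $\left(\textrm{tr}\Qvec^2\right)^3 - 6\left(\textrm{tr}\Qvec^3\right)^2 = 2s^2r^2\left(s-r\right)^2 \geq 0$. You instead work with the eigenvalues and recognize the same quantity as essentially the discriminant $-4e_2^3 - 27e_3^2 = \prod_{i<j}\left(\la_i-\la_j\right)^2$ of the depressed characteristic cubic, which is non-negative because $\Qvec$ is symmetric and so has real eigenvalues. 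The two arguments rest on the same underlying identity --- with $s=\la_1-\la_3$ and $r=\la_2-\la_3$ the paper's factor $2s^2r^2(s-r)^2$ is exactly $2\prod_{i<j}(\la_i-\la_j)^2$ --- but your version avoids the brute-force expansion, makes the non-negativity structurally transparent, and yields the equality case for free: $\beta(\Qvec)=0$ precisely when $\Qvec$ has a repeated eigenvalue, i.e.\ is uniaxial or isotropic, consistent with Proposition~\ref{prop:uniaxial}. Your Newton-identity computations $\textrm{tr}\Qvec^2=-2e_2$ and $\textrm{tr}\Qvec^3=3e_3$, the discriminant substitution, and the uniaxial sanity check are all correct, and you are right to flag that $\beta$ is undefined at $\Qvec=0$, a point the paper passes over silently.
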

\begin{proof} The quantity $\beta(\Qvec)$ is known as the
biaxiality parameter in the liquid crystal literature and it is
well-known that $\beta(\Qvec)\in\left[0,1\right]$
\cite{gartland2}. We present a simple proof here for completeness.

Since
$6\frac{\left(\textrm{tr}\Qvec^3\right)^2}{\left(\textrm{tr}\Qvec^2\right)^3}
\geq 0$, the inequality $\beta(\Qvec) \leq 1$ is trivial. To show
$\beta(\Qvec)\geq 0$, we use the representation (\ref{eq:8}) to
express $\textrm{tr}\Qvec^3$ and $\textrm{tr}\Qvec^2$ in terms of
the order parameters $s$ and $r$.
\begin{eqnarray}
\label{eq:biaxiality3} && \textrm{tr}\Qvec^3 =
\frac{1}{9}\left(2s^3 + 2r^3 - 3s^2 r - 3sr^2\right)\nonumber \\
&& \textrm{tr}\Qvec^2 = \frac{2}{3}\left( s^2 + r^2 - sr \right)
\end{eqnarray} A straightforward calculation shows that
$$\left(\textrm{tr}\Qvec^3\right)^2 = \frac{1}{81}\left(4s^6 +
4r^6 - 12 s^5 r - 12 s r^5 + 26 s^3 r^3 - 3 s^4 r^2 - 3s^2 r^4
\right)$$ and
$$\left(\textrm{tr}\Qvec^2\right)^3 = \frac{8}{27}\left(s^6 + r^6
- 3 s^5 r - 3 s r^5 - 7 s^3 r^3 + 6 s^2 r^4 + 6 s^4 r^2 \right).$$
One can then directly verify that
\begin{equation}
\label{eq:biaxiality4}\left(\textrm{tr}\Qvec^2\right)^3 -
6\left(\textrm{tr}\Qvec^3\right)^2 = 2 s^2 r^2\left( s- r
\right)^2 \geq 0
\end{equation}
as required.
\end{proof}

As a further illustration, let us assume that there exists an
uniaxial global energy minimizer in the admissible space, $\Acal$
in (\ref{eq:31}), where the boundary condition $\Qvec_0$ is of the
form
\begin{equation}
\label{eq:unbc}\Qvec_0 = s_0 \left(\nvec_0 \otimes \nvec_0 -
\frac{1}{3}\mathbf{I}\right),~\nvec_0: \partial \Omega \to S^2
\end{equation}
and $0 < s_0 < \min\left\{s_+, 1\right\}$ is a positive constant.
Then we have
\begin{lem}
\label{lem:2} Let $\Qvec_u$ be an uniaxial global minimizer of
$I_{LG}$ in the admissible space $\Acal$, with a smooth, uniaxial
and physically realistic boundary condition $\Qvec_0$, as in
(\ref{eq:unbc}). Then $\Qvec_u$ is necessarily of the form
\begin{equation}
\label{eq:uniglobalminimizer} \Qvec_u = s_u\left(\nvec_u\otimes
\nvec_u - \frac{1}{3}\mathbf{I}\right)
\end{equation} for some function $s_u: \bar{\Omega}\to \Rr$ and
unit-vector field $\nvec_u:\bar{\Omega} \to S^2$. The equilibrium
scalar order parameter is non-negative everywhere and obeys the
following inequalities
\begin{equation}
\label{eq:unscalar} 0\leq s_u(\rvec) \leq s_+ \quad \rvec \in
\bar{\Omega}.
\end{equation}
\end{lem}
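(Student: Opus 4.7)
The plan has three main steps. First, establish the representation \eqref{eq:uniglobalminimizer}: since $\Qvec_u$ is uniaxial, at each point two of its eigenvalues coincide, and writing the distinct eigenvalue as $\tfrac{2}{3}s_u$ and the double eigenvalue as $-\tfrac{1}{3}s_u$ (forced by tracelessness) gives $\Qvec_u = s_u\left(\nvec_u\otimes\nvec_u - \tfrac{1}{3}\mathbf{I}\right)$ with $\nvec_u$ any unit eigenvector of the distinct eigenvalue. This is pointwise an algebraic identity; measurability/regularity of $s_u$ and $\nvec_u$ away from the zero set of $s_u$ follows from the smoothness of $\Qvec_u$ recorded in Section~3.2.1 (on the set $\{s_u=0\}$ one picks $\nvec_u$ arbitrarily).

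Second, derive the upper bound $s_u(\rvec)\le s_+$. A direct calculation with the form \eqref{eq:uniglobalminimizer} gives $|\Qvec_u|^2 = \tfrac{2}{3}s_u^2$. Applying Theorem~\ref{thm:1} (whose hypothesis \eqref{eq:42} is met since $\Qvec_0$ is of the form \eqref{eq:unbc} with $0<s_0<\min\{s_+,1\}$, so $|\Qvec_0|=\sqrt{2/3}\,s_0$ lies below both thresholds), we obtain $\sqrt{2/3}\,|s_u(\rvec)|\le \Gamma = \sqrt{2/3}\,s_+$, i.e.\ $|s_u|\le s_+$ on $\overline{\Omega}$.

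Third, and this is where the real work lies, prove $s_u\ge 0$. The strategy is a sign-flipping competitor argument. Substituting \eqref{eq:uniglobalminimizer} into $I_{LG}$ and using $n_i n_{i,k}=0$ yields the reduced energy
\begin{equation*}
I_{LG}[\Qvec_u] = \int_{\Omega}\Bigl[\tfrac{a}{3}s_u^2 - \tfrac{2b}{27}s_u^3 + \tfrac{c}{9}s_u^4 + L\bigl(\tfrac{2}{3}|\grad s_u|^2 + 2 s_u^2|\grad\nvec_u|^2\bigr)\Bigr] dV.
\end{equation*}
Define the competitor $\tilde{\Qvec}_u = |s_u|\bigl(\nvec_u\otimes\nvec_u - \tfrac{1}{3}\mathbf{I}\bigr)$. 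Since $s_0>0$, we have $|s_u|=s_u$ on $\partial\Omega$, so $\tilde\Qvec_u\in\Acal$; by the chain rule for Sobolev functions $|\grad|s_u||^2=|\grad s_u|^2$ a.e., so the elastic term is unchanged. The bulk difference computes to
\begin{equation*}
f_B\bigl(s_u\bigr) - f_B\bigl(|s_u|\bigr) = \tfrac{4b}{27}|s_u|^3\,\mathbf{1}_{\{s_u<0\}},
\end{equation*}
which is strictly positive on $\{s_u<0\}$ because $b>0$. Therefore, if the set $\{s_u<0\}$ had positive measure, $I_{LG}[\tilde\Qvec_u]<I_{LG}[\Qvec_u]$, contradicting minimality; hence $s_u\ge 0$ a.e., and everywhere by continuity.

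The main obstacle is making the reduction to the scalar function $s_u$ fully rigorous despite the non-uniqueness of $\nvec_u$ where $s_u$ vanishes and the non-smoothness of $|s_u|$ across $\{s_u=0\}$. Both issues are handled by working in $W^{1,2}$ with the standard identity $\grad|s_u|=\sgn(s_u)\grad s_u$ a.e.\ and by noting that the competitor $\tilde\Qvec_u$ is constructed directly as a $W^{1,2}$ tensor field without needing any smooth extension of $\nvec_u$; the energy comparison uses only the pointwise algebraic relations $|\tilde\Qvec_u|=|\Qvec_u|$ and $\mathrm{tr}\,\tilde\Qvec_u^{\,3}=-\mathrm{tr}\,\Qvec_u^{\,3}$ where $s_u<0$.
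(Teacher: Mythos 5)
Your proposal is correct and follows essentially the same route as the paper: the competitor $\widetilde{\Qvec}_u=|s_u|\left(\nvec_u\otimes\nvec_u-\tfrac{1}{3}\mathbf{I}\right)$ is exactly the paper's sign flip $\widetilde{\Qvec}_u=-\Qvec_u$ on $\{s_u<0\}$, the elastic term is unchanged and the energy drops by $\tfrac{4b}{27}|s_u|^3$ there, and the upper bound is read off from Theorem~\ref{thm:1} via $|\Qvec_u|=\sqrt{2/3}\,|s_u|$. The only caveat (shared with the paper, which also just cites (\ref{eq:43})) is that $0<s_0<\min\{s_+,1\}$ gives $|\Qvec_0|=\sqrt{2/3}\,s_0$, which strictly satisfies (\ref{eq:42}) only when $s_0<\tfrac{1}{2}\min\{s_+,1\}$, so the appeal to Theorem~\ref{thm:1} is not quite as automatic as you state.
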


\begin{proof}
We prove Lemma~\ref{lem:2} by contradiction. Let $\Omega^* =
\left\{\rvec\in\Omega;~s_u(\rvec) < 0 \right\}$ be a measurable
interior subset of $\Omega$. The subset $\Omega^*$ does not
intersect $\partial \Omega$ by virtue of our choice of $\Qvec_0$
in (\ref{eq:unbc}). Consider the perturbation
\begin{equation}
  \label{eq:uniglobalminimizer2}
  \widetilde{\Qvec}_u(\rvec) =
  \begin{cases}
    \Qvec_u(\rvec), ~ \rvec \in \Omega\setminus\Omega^*,\\
      \\
 -\Qvec_u, ~ \rvec \in
 \Omega^*.
  \end{cases}
\end{equation}Then $\widetilde{\Qvec}_u \in \Acal$ and
$\widetilde{\Qvec}_u$ coincides with $\Qvec_u$ everywhere outside
$\Omega^*$. We explicitly estimate the free energy difference,
$I_{LG}[\widetilde{\Qvec}_u] - I_{LG}[\Qvec_u]$, as shown below
\begin{eqnarray}
&& I_{LG}[\widetilde{\Qvec}_u] - I_{LG}[\Qvec_u] =
\int_{\Omega^*}f_B(\widetilde{\Qvec}_u) +
L|\grad\widetilde{\Qvec}_u|^2 - \left(f_B\left(\Qvec_u\right)+
L|\grad\Qvec_u|^2\right) ~ dV =  \nonumber \\ && = \int_{\Omega^*}
\frac{2b}{3}\textrm{tr}\Qvec_u^3~dV =
\int_{\Omega^*}\frac{4b}{27}s_u^3~dV < 0
\label{eq:uniglobalminimizer3}
\end{eqnarray}
since $\textrm{tr}\Qvec_u^3 = \frac{2}{9}s_u^3$, $s_u < 0$ on
$\Omega^*$ by assumption and $b>0$. This contradicts the absolute
energy minimality of $\Qvec_u$. Hence, $\Omega^*$ is empty and
$s_u \geq 0$ everywhere in $\Omega$.

The upper bound in (\ref{eq:unscalar}) follows directly from
(\ref{eq:43}) i.e.
\begin{equation}
\label{eq:uniglobalminimizer4} |\Qvec_u| = \sqrt{\frac{2}{3}}|s_u|
\leq \sqrt{\frac{2}{3}} s_+
\end{equation}
and since $s_u \geq 0$, we have that $0\leq s_u \leq s_+$ from
(\ref{eq:uniglobalminimizer4}). Lemma~\ref{lem:2} now follows.
\end{proof}

 \textbf{Remark:} We are not guaranteed the existence of a
 uniaxial global energy minimizer in our admissible space.
 We assume the existence of $\Qvec_u$ in Lemma~\ref{lem:2}.
 A more
 technically precise formulation of the problem would be to
 minimize $I_{LG}$ in the restricted class of uniaxial
 $\Qvec$-tensors
 $$\Acal_u = \left\{\Qvec\in W^{1,2}\left(\Omega;S_0\right);~
 \Qvec= s\left(\nvec\otimes \nvec - \frac{1}{3}\mathbf{I}\right)
 a.e. ~in~\Omega\right\} $$ where $s$ is a real-valued function
 and $\nvec$ is a unit-vector field, subject to uniaxial boundary
 conditions. In this case, one can prove the existence of a
 uniaxial global minimizer $\Qvec_u$ in the restricted class
 $\Acal_u$ and the statement of Lemma~\ref{lem:2} still
 holds. However, the proof is technically more involved and we omit
 the details for brevity.

Given the explicit upper bound $\Gamma$ in (\ref{eq:43}) for the
norm of a global energy minimizer, the corresponding equilibrium
scalar order parameters are confined to the bounded region
$\Sigma_\Gamma = \left\{(s,~r);~ |\Qvec^*| \leq \Gamma \right\}$
in the $(s,~r)$-plane. We define the elastic triangle
$\triangle_{el}(T)$ to be the triangle $T_{\sqrt{6}\Gamma}$, where
$T_{\sqrt{\frac{3}{2}\Gamma}}\subset \Sigma_\Gamma \subset
T_{\sqrt{6}\Gamma}$ in (\ref{eq:region}). The elastic triangle can
be explicitly specified in terms of the temperature and the
material-dependent bulk constants. One can directly verify that
$\Sigma_\Gamma \subset \triangle_{el}(T)\subseteq T_{\psi}$ if and
only if
\begin{equation}
\label{eq:e11new} \frac{b-c}{6\alpha} + T^* \leq T \leq
\frac{b^2}{24 \alpha c} + T^*. \end{equation} and $\Sigma_\Gamma
\supset T_{\sqrt{\frac{3}{2}}\Gamma} \supseteq T_{\psi}$ if and
only if
\begin{equation}
\label{eq:el2} T \leq \frac{1}{3\alpha}\left(b - 2c\right)+T^*.
\end{equation} In other words, for temperatures $T \in\left[0,\frac{1}{3\alpha}\left(b -
2c\right)+T^*\right)$, the equilibrium order parameters may move
outside the physical triangle. For the liquid crystal material
MBBA, $\Sigma_\Gamma \supset T_\psi$ for $T < 44.52^{o}C$ i.e. the
Landau-De Gennes predictions fail to be consistent with the
probabilistic second-moment definition of $\Qvec$ within a
$2^{o}C$-neighbourhood of the nematic-isotropic transition
temperature.

\subsection{Maximum principle approach}
\label{sec:max}

In this section, we carry out a parallel analysis in the
high-temperature regime $a>\frac{b^2}{24c}$ and extend our
analysis to more general Landau-De Gennes energy functionals,
where $f_B$ is a general even polynomial in the $\Qvec$-tensor
components.

\begin{thm} Let $\Qvec^*$ be a global minimizer of
$I_{LG}$ in the admissible class $\mathcal{A}$, where $I_{LG}$ and
$\mathcal{A}$ have been defined in (\ref{eq:12}) and (\ref{eq:31})
respectively, in the temperature regime $a> \frac{b^2}{24c}$. Then
the function $|\Qvec^*|:\overline{\Omega}\rightarrow \mathbb{R} $
attains its maximum on the domain boundary. In particular, if the
boundary condition $\Qvec_0$ satisfies
\begin{equation} \label{eq:bc} |\Qvec_0(\rvec)| <
\frac{1}{\sqrt{6}} \quad \rvec\in
\partial\Omega,
\end{equation} then the scalar
order parameters of $\Qvec^*$ are contained inside $T_{\psi}$, for
this high-temperature regime. \label{thm:2}
\end{thm}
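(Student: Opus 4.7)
\medskip

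\noindent\textbf{Proof proposal.} The section is titled \emph{Maximum principle approach}, so the natural plan is to show that the scalar field $\phi(\rvec)=|\Qvec^*(\rvec)|^2$ is subharmonic on $\Omega$ in the regime $a>\frac{b^2}{24c}$, and then invoke the weak maximum principle for subharmonic functions on a bounded domain. Since $\Qvec^*$ is a smooth classical solution of the Euler--Lagrange system (\ref{eq:36}) by the regularity discussion preceding Theorem~\ref{thm:1}, there are no regularity obstructions to differentiating twice.

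The first step is to compute $\Delta \phi = 2\Qvec^*_{\alpha\beta,k}\Qvec^*_{\alpha\beta,k} + 2\Qvec^*_{\alpha\beta}\Delta\Qvec^*_{\alpha\beta}$ and then eliminate $\Delta\Qvec^*_{\alpha\beta}$ using (\ref{eq:36}). Contracting the Euler--Lagrange system with $\Qvec^*_{\alpha\beta}$ and using tracelessness ($\Qvec^*_{\alpha\beta}\delta_{\alpha\beta}=0$), together with $\Qvec^*_{\alpha\beta}\Qvec^*_{\alpha p}\Qvec^*_{p\beta}=\mathrm{tr}\,\Qvec{^*}^3$, one obtains
\begin{equation*}
2L\,\Qvec^*_{\alpha\beta}\Delta\Qvec^*_{\alpha\beta} = a|\Qvec^*|^2 - b\,\mathrm{tr}\,\Qvec{^*}^3 + c|\Qvec^*|^4.
\end{equation*}
Dropping the non-negative gradient-squared term, this gives
\begin{equation*}
L\,\Delta\phi \;\geq\; |\Qvec^*|^2\!\left(a - b\,\frac{\mathrm{tr}\,\Qvec{^*}^3}{|\Qvec^*|^2} + c|\Qvec^*|^2\right).
\end{equation*}

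The next step is to control the cubic term using Lemma~\ref{lem:1}, which gives $|\mathrm{tr}\,\Qvec{^*}^3|\leq \frac{1}{\sqrt{6}}|\Qvec^*|^3$, so that the bracket is bounded below by the quadratic $Q(u):= a - \frac{b}{\sqrt{6}}u + c u^2$ evaluated at $u=|\Qvec^*|$. The discriminant of $Q$ is $\frac{b^2}{6}-4ac$, which is strictly negative precisely when $a>\frac{b^2}{24c}$; so under our hypothesis $Q(u)>0$ for every $u\geq 0$ and hence $\Delta\phi\geq 0$ throughout $\Omega$. By the weak maximum principle for subharmonic functions on the bounded domain $\Omega$, together with the continuity of $\phi$ up to $\partial\Omega$ guaranteed by the boundary regularity of $\Qvec^*$, the function $\phi$ attains its maximum on $\partial\Omega$, proving the first assertion.

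For the second assertion, if $|\Qvec_0(\rvec)|<1/\sqrt{6}$ on $\partial\Omega$, then the maximum principle just established yields $|\Qvec^*(\rvec)|<1/\sqrt{6}$ for all $\rvec\in\overline{\Omega}$. Taking $\eta=1/\sqrt{6}$ in the inclusion (\ref{eq:region}) from Section~\ref{sec:oc} gives $\Sigma_{1/\sqrt{6}}\subset T_{\sqrt{6}\cdot 1/\sqrt{6}}=T_1=T_{\psi}$, so the equilibrium scalar order parameters lie strictly inside the physical triangle. The main technical point is the combined use of (i) Lemma~\ref{lem:1} to convert the sign-indefinite cubic term into a quadratic lower bound and (ii) the temperature threshold $a>\frac{b^2}{24c}$, which is exactly the condition ensuring that this quadratic is positive definite; the rest of the argument is standard subharmonic-function machinery, so no further obstacle is anticipated.
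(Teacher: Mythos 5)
Your proof is correct and follows essentially the same route as the paper: contracting the Euler--Lagrange system with $\Qvec^*$, using Lemma~\ref{lem:1} to control the sign-indefinite cubic term, and observing that the resulting quadratic in $|\Qvec^*|$ is positive definite precisely when $a>\frac{b^2}{24c}$. The only difference is cosmetic --- you establish global subharmonicity of $|\Qvec^*|^2$ and invoke the weak maximum principle, whereas the paper argues by contradiction at an interior maximum point --- and your framing is, if anything, slightly cleaner since it avoids assuming a \emph{strict} interior maximum.
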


\begin{proof} The proof proceeds by contradiction. We work in the
temperature regime $a> \frac{b^2}{24c}$ and assume that the
function $|\Qvec^*|:\overline{\Omega}\rightarrow \mathbb{R}$
attains a strict maximum at an interior point $\rvec^*\in\Omega$,
where $|\Qvec^*(\rvec^*)|>0$.

The global minimizer $\Qvec^*$ is a classical smooth solution of
the Euler-Lagrange equations
\begin{equation}
2 L\Delta \Qvec_{\alpha\beta} = a \Qvec_{\alpha\beta} +
b\left(\frac{1}{3}\textrm{tr}\Qvec^2 \delta_{\alpha\beta} -
\Qvec_{\alpha p}\Qvec_{p \beta}\right) +
c\left(\Qvec_{pq}\right)^2\Qvec_{\alpha\beta} \quad
L>0,~\alpha,\beta = 1 \ldots 3. \label{eq:EL}
\end{equation}
Therefore, the function $|\Qvec^*|^2:\overline{\Omega}\rightarrow
\mathbb{R}$ is also a smooth function and we necessarily have that
\begin{equation}\Delta |\Qvec^*|^2  \leq 0 \quad \textrm{at
$\rvec^*\in\Omega$},
\label{eq:max2}
\end{equation} according to our hypothesis \cite{evans}.
We compute $\Delta |\Qvec^*|^2$ at this interior
maximum point. One can readily show that
\begin{equation}
L\Delta |\Qvec^*|^2 = 2 L \left(|\grad\Qvec^*|^2 +
\Qvec^*_{ij}\Qvec^*_{ij,kk}\right). \label{eq:max3}
\end{equation}
We substitute the Euler-Lagrange equations (\ref{eq:EL}) into
(\ref{eq:max3}) to obtain the following
\begin{eqnarray}
&& L\Delta |\Qvec^*|^2 = 2L |\grad\Qvec^*|^2 + \left(a|\Qvec^*|^2 - b\textrm{tr}\left(\Qvec^*\right)^3 + c|\Qvec^*|^4\right) + \frac{b}{3}\left(\textrm{tr}\Qvec{^*}^2\right)\Qvec^*_{ii}\nonumber \\
&& \qquad = 2 L |\grad\Qvec^*|^2 + a\textrm{tr}\Qvec{^*}^2 -
b\textrm{tr}\Qvec{^*}^3 + c\left(\textrm{tr}\Qvec{^*}^2\right)^2,
\label{eq:max4}
\end{eqnarray} since $\Qvec^*_{ii} = 0$.

Consider the function $G:S_0 \rightarrow \mathbb{R}$ defined by
\begin{equation}
G(\Qvec) = a\textrm{tr}\Qvec^2 - b\textrm{tr}\Qvec^3 +
c\left(\textrm{tr}\Qvec^2\right)^2. \label{eq:max5}
\end{equation} Then $G$ is bounded from below by
$$ G(\Qvec) \geq h(|\Qvec|) = a |\Qvec|^2 - \frac{b}{\sqrt{6}}|\Qvec|^3 + c|\Qvec|^4 $$
from Lemma~\ref{lem:1}. The function $h:\bar{\Omega} \to \Rr$ has
its global minimum at the isotropic state, $\Qvec=0$, and
$$h(|\Qvec|) > 0 \quad \Qvec \neq 0$$
in the temperature regime $a>\frac{b^2}{24c}$. This implies that
$$ G(\Qvec^*) > 0 \quad \textrm{at $\rvec^*\in\Omega$} $$
and consequently $\Delta |\Qvec^*|^2(\rvec^*) > 0$ from
(\ref{eq:max4}). This contradicts our hypothesis and
Theorem~\ref{thm:2} now follows.

In particular, if the boundary condition $\Qvec_0$ satisfies the
hypothesis (\ref{eq:bc}), then the global energy minimizer
$\Qvec^*$ satisfies the inequality
\begin{equation}
\label{eq:max9} |\Qvec^*(\rvec)|< \frac{1}{\sqrt{6}}
\end{equation} on $\overline{\Omega}$.
From (\ref{eq:region}), this is sufficient to ensure that
$\Qvec^*$ is physically realistic, in the sense that its scalar
order parameters do not take values outside $T_{\psi}$.
\end{proof}

Theorem~\ref{thm:2} shows that in the high temperature regime
$a>\frac{b^2}{24c}$, the norm of a global energy minimizer attains
its maximum on the boundary. The isotropic state $\Qvec=0$ is the
global minimizer of the bulk energy density, $f_B$, in this
high-temperature regime and it is not surprising that we observe a
dissipation of order in the interior. However, it is interesting
that there are no local fluctuations in the interior i.e. there
are no interior regions where $|\Qvec^*|$ experiences a local
increase compared to the boundary norm - $\max_{\rvec \in
\partial\Omega} |\Qvec_0(\rvec)|$. Therefore, Theorem~\ref{thm:2}
suggests a monotonic decrease in order as we move away from the
boundary and it would be interesting to analytically estimate the
characteristic length scale of order decay for this problem.

Our methods readily extend to a more general bulk energy density,
$f_{B,n}$, which is a polynomial of even degree $`n'$ in the
$\Qvec$-tensor invariants i.e. $\textrm{tr}\Qvec^2$ and
$\textrm{tr}\Qvec^3$ \cite{newtonmottram} with $n\geq 4$ (since
$f_{B,n}$ has to be minimally quartic to allow a first-order
nematic-isotropic phase transition). We take $f_{B,n}$ to be
\begin{equation}
\label{eq:newfb1} f_{B,n} = a_2(T) \textrm{tr}\Qvec^2 -
a_3\textrm{tr}\Qvec^3 + a_4\left(\textrm{tr}\Qvec^2\right)^2 +
\ldots + \sum_{m,p \in \mathbb{Z}^+; 2m+3p=n}
a_{m,p}\left(\textrm{tr}\Qvec^2\right)^m\left(\textrm{tr}\Qvec^3\right)^p
\end{equation}
where $\mathbb{Z}^+$ denotes the set of non-negative integers,
$a_3, a_4 > 0$ and
\begin{equation}
\label{eq:newfb2} a_{\frac{n}{2},0} > \sum_{m,p \in
\mathbb{Z}^+;p\geq 1; 2m+3p=n} |a_{m,p}|.
\end{equation} The first coefficient $a_2(T)$ has a linear
dependence on the absolute temperature by analogy with
(\ref{eq:a}) whereas the remaining coefficients
$\left\{a_3,a_4,\ldots, \left\{a_{m,p}\right\}\right\}$ are taken
to be temperature-independent, material-dependent bulk constants.

We define the corresponding Landau-De Gennes energy functional to
be
\begin{equation}
\label{eq:newfb3} I_{n}[\Qvec] = \int_{\Omega} f_{B,n}(\Qvec) +
L|\grad \Qvec|^2~dV
\end{equation}
and our admissible space is
\begin{equation}
\label{eq:newfb4} \Acal_n = \left\{\Qvec\in
W^{1,n}\left(\Omega;S_0\right); \Qvec = \Qvec_0 ~\textrm{on
$\partial \Omega$}\right\}
\end{equation}
where the Sobolev space $W^{1,n}$ is defined to be \cite{evans}
\begin{equation}
\label{eq:newfb5} W^{1,n}\left(\Omega;S_0\right) = \left\{\Qvec
\in S_0;~ \int_{\Omega} |\Qvec|^n + |\grad \Qvec|^n~dV < \infty
\right\}
\end{equation} and $\Qvec_0$ is a smooth, physically realistic
boundary condition in the sense of (\ref{eq:bc}). We have the
following result by analogy with Theorem~\ref{thm:1}.

\begin{prop}
\label{prop:fb} Let $\Qvec^*$ be a global minimizer of $I_n$ in
the admissible space $\Acal_n$. Then
\begin{equation}
\label{eq:newfb6} |\Qvec^*| \leq \max
\left\{C\left(a_2,a_3,a_4,\ldots,\left\{a_{m,p}\right\}\right),
~\max_{\rvec\in\partial\Omega}|\Qvec_0|\right\} \quad \textrm{on
$\bar{\Omega}$}
\end{equation}
where $C$ is a positive constant that only depends on the absolute
temperature and the bulk coefficients and is independent of the
elastic constant $L$ in (\ref{eq:newfb3}).
\end{prop}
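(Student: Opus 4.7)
The plan is to mimic the maximum principle argument of Theorem~\ref{thm:2}, but with $f_B$ replaced by the higher-order polynomial $f_{B,n}$, and to identify the constant $C$ explicitly in terms of the bulk coefficients. First I would invoke the direct method of the calculus of variations in $\Acal_n$: coercivity is supplied by the top-degree term through the inequality $a_{n/2,0} > \sum_{p\ge 1}|a_{m,p}|$ together with Lemma~\ref{lem:1}, and weak lower semicontinuity follows from convexity of $|\grad \Qvec|^n$, yielding a global minimizer $\Qvec^*$. Standard elliptic regularity for the corresponding Euler--Lagrange system promotes $\Qvec^*$ to a classical smooth solution in $\Omega$, smooth up to the boundary for smooth data.

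Next I would argue by contradiction: suppose the smooth function $|\Qvec^*|^2:\overline{\Omega}\to\Rr$ attains a strict interior maximum at some $\rvec^*\in\Omega$ with $|\Qvec^*(\rvec^*)|>C$, where $C$ is to be chosen. The crucial algebraic identity is obtained by contracting the Euler--Lagrange system
\[
2L\,\Delta \Qvec^*_{ij} \;=\; \frac{\partial f_{B,n}}{\partial \Qvec_{ij}}(\Qvec^*) \;-\; \frac{1}{3}\delta_{ij}\,\textrm{tr}\!\left(\frac{\partial f_{B,n}}{\partial \Qvec}\right)
\]
with $\Qvec^*_{ij}$: tracelessness kills the Lagrange multiplier term, and Euler's homogeneous function identity applied term-by-term gives
\[
\Qvec^*_{ij}\,\frac{\partial f_{B,n}}{\partial \Qvec_{ij}}(\Qvec^*) \;=\; \sum_{d=2}^{n} d\,f_d(\Qvec^*),
\]
where $f_d$ is the degree-$d$ homogeneous component of $f_{B,n}$, so that
\[
L\,\Delta |\Qvec^*|^2 \;=\; 2L|\grad \Qvec^*|^2 + \sum_{d=2}^{n} d\,f_d(\Qvec^*).
\]

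The main work is to show this right-hand side is strictly positive for $|\Qvec^*|$ sufficiently large. The top-degree part $f_n$ combines $a_{n/2,0}|\Qvec^*|^n$ with terms $a_{m,p}(\textrm{tr}\Qvec^{*\,2})^m(\textrm{tr}\Qvec^{*\,3})^p$ for $p\ge 1$; using Lemma~\ref{lem:1} in the form $|\textrm{tr}\Qvec^{*\,3}| \le \tfrac{1}{\sqrt 6}|\Qvec^*|^3$ together with hypothesis~(\ref{eq:newfb2}) yields
\[
f_n(\Qvec^*) \;\ge\; \Big(a_{n/2,0} - \sum_{p\ge 1} 6^{-p/2}|a_{m,p}|\Big)|\Qvec^*|^n \;\ge\; \eps_0 |\Qvec^*|^n
\]
for some $\eps_0>0$ depending only on the $a_{m,p}$. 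Each lower-degree component satisfies $|f_d(\Qvec^*)| \le C_d |\Qvec^*|^d$ by the same lemma, so
\[
\sum_{d=2}^{n} d\,f_d(\Qvec^*) \;\ge\; n\eps_0\,|\Qvec^*|^n - \sum_{d=2}^{n-1} d\,C_d\,|\Qvec^*|^d,
\]
which is strictly positive whenever $|\Qvec^*| > C$ for a constant $C=C(a_2,a_3,a_4,\ldots,\{a_{m,p}\})$ independent of $L$. If the interior maximum of $|\Qvec^*|^2$ exceeds $C$ in norm, then $\Delta|\Qvec^*|^2(\rvec^*)>0$, contradicting the second-derivative test for an interior maximum; hence $\max_{\overline\Omega}|\Qvec^*|$ is bounded by the larger of $C$ and the boundary maximum $\max_{\partial\Omega}|\Qvec_0|$, which is the claim.

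The main obstacle is purely algebraic: ensuring that the cubic-type terms in $f_{B,n}$ (those carrying odd powers of $\textrm{tr}\Qvec^3$, which can have either sign) are absorbed by the leading $(\textrm{tr}\Qvec^2)^{n/2}$ term. This is exactly what hypothesis~(\ref{eq:newfb2}) is engineered to do, via the sharp bound $|\textrm{tr}\Qvec^3|\le 6^{-1/2}|\Qvec|^3$ from Lemma~\ref{lem:1}. Once the algebraic inequality $\sum_d d\,f_d(\Qvec^*) \ge n\eps_0 |\Qvec^*|^n - (\text{lower order})$ is in hand, the rest of the argument is a direct transcription of the maximum-principle step in the proof of Theorem~\ref{thm:2}.
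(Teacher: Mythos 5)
Your proposal is correct and follows essentially the same route as the paper: a maximum-principle contradiction at an interior maximum of $|\Qvec^*|^2$, contracting the Euler--Lagrange system with $\Qvec^*_{ij}$ (the Lagrange multiplier vanishing by tracelessness), and bounding the bulk term below by a degree-$n$ polynomial in $|\Qvec^*|$ whose leading coefficient is positive thanks to hypothesis~(\ref{eq:newfb2}) and the bound $|\textrm{tr}\Qvec^3|\leq 6^{-1/2}|\Qvec|^3$ from Lemma~\ref{lem:1}. Your explicit use of Euler's homogeneity identity and the sharper weights $6^{-p/2}$ are minor refinements of the paper's lower bound $K(|\Qvec|)$, whose largest root plays the role of your constant $C$.
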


\begin{proof}
The existence of a global energy minimizer, $\Qvec^*$, for $I_n$
in the space $\Acal_n$, follows from the direct methods in the
calculus of variations and we can use standard results in elliptic
regularity to deduce that $\Qvec^*$ is smooth everywhere in $\Omega$. Consider the function $|\Qvec^*|^2:\Omega
\to \Rr^+$ and assume that it attains its maximum at the interior
point $\rvec^*\in\Omega$.

The global minimizer $\Qvec^*$ is a classical solution of the
Euler-Lagrange equations
\begin{equation}
\label{eq:newfb7} 2L\Qvec_{ij,kk} = \frac{\partial
f_{B,n}}{\partial \Qvec_{ij}} - \frac{1}{3}\frac{\partial
f_{B,n}}{\partial \Qvec_{kk}}\delta_{ij}
\end{equation}
where the second term is a Lagrange multiplier accounting for
tracelessness. We multiply both sides of (\ref{eq:newfb7}) by
$\Qvec_{ij}$ and use $\Delta|\Qvec|^2=2  \left(|\grad\Qvec|^2 +
\Qvec_{ij}\Qvec_{ij,kk}\right)$ to get
\begin{equation}
\label{eq:newfb8} L \Delta |\Qvec|^2 = \Qvec_{ij} \frac{\partial
f_{B,n}}{\partial \Qvec_{ij}} + 2L|\grad \Qvec|^2.
\end{equation}
Then
\begin{equation}
\label{eq:newfb9} \Delta |\Qvec^*|^2 \leq 0 ~\textrm{at $\rvec^*
\in \Omega$}
\end{equation} from our hypothesis.

We note that $\Qvec_{ij} \frac{\partial f_{B,n}}{\partial
\Qvec_{ij}}$ is a polynomial of degree $`n'$ in $\Qvec$ and from
Lemma~\ref{lem:1} and (\ref{eq:newfb2}), we have that
$$\Qvec_{ij} \frac{\partial f_{B,n}}{\partial \Qvec_{ij}}\geq
K(|\Qvec|)$$ where
\begin{equation}
\label{eq:newfb10} K(|\Qvec|) = a_2|\Qvec|^2 -
\frac{a_3}{\sqrt{6}}|\Qvec|^3+ \ldots + \left(a_{\frac{n}{2},0} -
\sum_{m,p \in \mathbb{Z}^+;p\geq 1; 2m+3p=n}
|a_{m,p}|\right)|\Qvec|^n.
\end{equation} The function $K:\Omega \to \Rr$ is a polynomial
of degree $`n'$, has $`n'$ zeros
$\left\{|\Qvec_1|,|\Qvec_n|,\ldots,|\Qvec_n|\right\}$ where
$|\Qvec_1|\leq |\Qvec_2|\leq \ldots \leq |\Qvec_n|$ and $K$ is a
monotonically increasing function of $|\Qvec|$ for
$|\Qvec|>|\Qvec_n|$.

If $|\Qvec^*(\rvec^*)| > |\Qvec_n|$, then we necessarily have that
$\Delta |\Qvec^*|^2 > 0$ at $\rvec^*$ (from (\ref{eq:newfb8})),
contradicting the hypothesis (\ref{eq:newfb9}). We, thus, conclude
that
\begin{equation}
\label{eq:newfb11} |\Qvec^*| \leq |\Qvec_n| \quad \textrm{on
$\Omega$}
\end{equation}
where $|\Qvec_n| =
C\left(a_2,a_3,a_4,\ldots,\left\{a_{m,p}\right\}\right)$ can be
explicitly expressed in terms of the bulk coefficients.
Proposition~\ref{prop:fb} now follows from combining
(\ref{eq:newfb11}) and the maximum norm of $\Qvec_0$ on the
boundary.
\end{proof}

The explicit upper bound (\ref{eq:newfb6}) allows us to define the
admissible domain for the equilibrium scalar order
parameters as in Section~\ref{sec:oc}. For certain choices of the
bulk coefficients, this domain is larger than
$T_{\psi}$ and consequently, the equilibrium scalar order parameters may take
values outside the physical triangle.

\section{Discussion}
\label{sec:4}

We have studied qualitative properties of global minimizers of the
Landau-De Gennes energy functional, $I_{LG}$, in smooth
three-dimensional geometries with Dirichlet boundary conditions.
We have obtained an explicit upper bound for the norm of a global
energy minimizer in terms of the temperature and
material-dependent bulk constants, independent of the elastic
constant. In particular, we have defined two triangles in the
order-parameter, $(s,~r)$-plane: (a) the bulk triangle
$\triangle(T)$ which accounts for the stationary points of $f_B$
(b) the elastic triangle $\triangle_{el}(T)$ which accounts for
the effects of the elastic energy density and $\triangle(T)\subset
\triangle_{el}(T)$. The equilibrium scalar order parameters take
values inside or on the boundary of $\triangle_{el}(T)$ and the
distance $D(T)$ between $\triangle_{el}(T)$ and $\triangle(T)$
scales as
\begin{equation}
\label{eq:dis1} D(T) \leq s_+ = \frac{b + \sqrt{b^2-24ac}}{4c}.
\end{equation} This, in effect, quantifies the effect of elastic
perturbations on the bulk energy minimizers. Secondly, this
explicit bound is also compared to the probabilistic bounds in
(\ref{eq:7}) and we find that the equilibrium scalar order
parameters may move outside the physical triangle $T_\psi$ and take physically unrealistic values larger than unity, in the
low-temperature regime. For the liquid crystal material MBBA, the
Landau-De Gennes predictions fail to be physically realistic
within a $2^{o}C$-neighbourhood of the nematic-isotropic
transition temperature.

A natural question is - how can we reconcile the differences
between the Landau-De Gennes predictions and the probabilistic
second-moment definition of $\Qvec$ in the low-temperature regime?
The Landau-De Gennes bulk energy density, $f_B$, has no term that
enforces the probabilistic bounds in (\ref{eq:7}) or penalizes
configurations that lie outside $T_\psi$. A first-step in this
direction is to use a Ginzburg-Landau approach \cite{bbh}. We
define a modified Landau-De Gennes energy functional, $F_\eps$, as
shown below
\begin{equation}
\label{eq:dis2} F_\eps[\Qvec] = \int_{\Omega}
f_B\left(\Qvec\right) + f_\eps(|\Qvec|) + L|\grad \Qvec|^2~dV
\end{equation} where
\begin{equation}
  \label{eq:dis2new}
  f_\eps(|\Qvec|) =
  \begin{cases}
    0, ~ |\Qvec|\leq \frac{1}{\sqrt{6}},\\
      \\
 \frac{1}{\eps^2}\left(|\Qvec|^2 - \frac{1}{6}\right)^2, ~
 |\Qvec|>\frac{1}{\sqrt{6}},
  \end{cases}
\end{equation}
$f_B$ is as in (\ref{eq:13}) and $\eps>0$ is a small positive
parameter. We can obtain an explicit upper bound for the norm of a
global energy minimizer using a maximum principle approach as
shown below.

\begin{prop}
\label{prop:ginzburglandau} Let $\Qvec^*$ be a global minimizer of
$F_\eps$ in the admissible space $\Acal$ in (\ref{eq:31}). Then
\begin{equation}
\label{eq:dis3} |\Qvec^*|\leq \max\left\{\frac{1}{\sqrt{6}},~\frac{b\eps^2}{\sqrt{6}(8
+ 2\eps^2 c)} + \frac{\sqrt{64 + 16\eps^2\left(c-6a\right) +
\eps^4\left(b^2 - 24ac\right)}}{\sqrt{6}(8 + 2\eps^2 c)},
\max_{\rvec\in\partial\Omega}|\Qvec_0|\right\} \quad \textrm{on
$\bar{\Omega}$}
\end{equation} where $a,b,c$ are the bulk constants in $f_B$ in
(\ref{eq:13}).\end{prop}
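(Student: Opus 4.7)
The plan is to imitate the maximum-principle argument of Theorem~\ref{thm:2} and Proposition~\ref{prop:fb}, with the penalty $f_\eps$ contributing an extra quadratic term to the resulting scalar polynomial inequality. Existence of a global minimizer $\Qvec^*\in\Acal$ follows from the direct methods in the calculus of variations, since $F_\eps\geq I_{LG}$ is coercive on $W^{1,2}$ and weakly lower semicontinuous. The density $f_\eps$ is only $C^{1,1}$ as a function of $\Qvec$, but on the open superlevel set $\{\rvec\in\Omega:|\Qvec^*(\rvec)|>1/\sqrt{6}\}$ the integrand is smooth and polynomial in $\Qvec$, so standard elliptic regularity gives $\Qvec^*\in C^\infty$ on that set, which is sufficient for a pointwise maximum-principle computation.

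I would then argue by contradiction. Suppose $|\Qvec^*|$ attains its maximum on $\overline{\Omega}$ at an interior point $\rvec^*\in\Omega$ and that $|\Qvec^*(\rvec^*)|$ strictly exceeds every quantity on the right-hand side of (\ref{eq:dis3}). In particular $|\Qvec^*(\rvec^*)|>1/\sqrt{6}$, so $\Qvec^*$ is smooth near $\rvec^*$ and satisfies classically the Euler--Lagrange system
$$2L\Delta \Qvec^*_{\alpha\beta} = a\Qvec^*_{\alpha\beta} + b\Bigl(\tfrac{1}{3}\mathrm{tr}\Qvec^{*2}\delta_{\alpha\beta} - \Qvec^*_{\alpha p}\Qvec^*_{p\beta}\Bigr) + c|\Qvec^*|^2 \Qvec^*_{\alpha\beta} + \tfrac{4}{\eps^2}\Bigl(|\Qvec^*|^2 - \tfrac{1}{6}\Bigr)\Qvec^*_{\alpha\beta}.$$
Contracting with $\Qvec^*_{\alpha\beta}$, using $\Qvec^*_{ii}=0$ to kill the $\delta_{\alpha\beta}$ term, and applying the identity $L\Delta|\Qvec^*|^2 = 2L(|\grad\Qvec^*|^2 + \Qvec^*_{ij}\Qvec^*_{ij,kk})$ from the proof of Theorem~\ref{thm:2} yields
$$L\Delta|\Qvec^*|^2 = 2L|\grad\Qvec^*|^2 + a|\Qvec^*|^2 - b\,\mathrm{tr}\Qvec^{*3} + c|\Qvec^*|^4 + \tfrac{4}{\eps^2}\Bigl(|\Qvec^*|^2 - \tfrac{1}{6}\Bigr)|\Qvec^*|^2.$$

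Bounding $b\,\mathrm{tr}\Qvec^{*3}\leq (b/\sqrt{6})|\Qvec^*|^3$ via Lemma~\ref{lem:1} and dropping the nonnegative gradient contribution, the sign of $\Delta|\Qvec^*|^2(\rvec^*)$ is controlled from below by $u^2 Q(u)/L$ with $u=|\Qvec^*(\rvec^*)|$ and
$$Q(u) = (c\eps^2 + 4)\,u^2 - \tfrac{b\eps^2}{\sqrt{6}}\,u + \Bigl(a\eps^2 - \tfrac{2}{3}\Bigr).$$
A direct application of the quadratic formula, together with the identity $b^2\eps^4/6 - 4(c\eps^2+4)(a\eps^2-2/3) = \bigl(\eps^4(b^2-24ac) + 16\eps^2(c-6a) + 64\bigr)/6$, shows that the larger root of $Q$ is precisely the middle expression $\Lambda$ appearing in (\ref{eq:dis3}). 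For $u>\Lambda$ the polynomial $Q$ is strictly positive, forcing $\Delta|\Qvec^*|^2(\rvec^*)>0$ and contradicting the necessary condition $\Delta|\Qvec^*|^2(\rvec^*)\leq 0$ at an interior maximum. The interior maximum of $|\Qvec^*|$ therefore cannot exceed $\max\{1/\sqrt{6},\Lambda\}$; combining with the maximum of $|\Qvec_0|$ on $\partial\Omega$ gives (\ref{eq:dis3}).

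The main obstacle I anticipate is the limited regularity of $f_\eps$ at the threshold $|\Qvec|=1/\sqrt{6}$: $f_\eps$ is only $C^{1,1}$ there, so global $C^2$-regularity of $\Qvec^*$ is not automatic and the pointwise Laplacian argument requires care. The resolution I have in mind is to confine the Euler--Lagrange computation to the open set $\{|\Qvec^*|>1/\sqrt{6}\}$, where the density reduces to a smooth polynomial in $\Qvec$ and elliptic regularity applies; if the interior maximum instead lies in the complementary set, it is automatically at most $1/\sqrt{6}$, which is already one of the terms on the right-hand side of (\ref{eq:dis3}).
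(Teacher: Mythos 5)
Your proposal is correct and follows essentially the same route as the paper's own proof: a maximum-principle argument contracting the Euler--Lagrange system with $\Qvec^*$, bounding $\textrm{tr}\Qvec^{*3}$ via Lemma~\ref{lem:1}, and identifying the largest root of the resulting quartic $u^2 Q(u)$ with the middle expression in (\ref{eq:dis3}). Your explicit handling of the $C^{1,1}$ regularity of $f_\eps$ at $|\Qvec|=1/\sqrt{6}$, by confining the pointwise computation to the open set where the penalty is active, is a welcome refinement of a point the paper passes over silently.
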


\begin{proof}
The existence of a global energy minimizer $\Qvec^*$ follows from
the direct methods in the calculus of variations and we use
standard arguments in elliptic regularity to deduce that $\Qvec^*$
is smooth everywhere in $\Omega$, up to the boundary. Then
$|\Qvec^*|^2:\Omega \to \Rr^+$ is a smooth function and we assume
that it attains its maximum at an interior point $\rvec^*\in
\Omega$ and $|\Qvec^*(\rvec^*)| > \frac{1}{\sqrt{6}}$ (i.e.
$f_\eps \neq 0$ at $\rvec^*$ from (\ref{eq:dis2new})). It follows
that
$$\Delta |\Qvec^*|^2 \leq 0 \quad \textrm{at $\rvec^* \in\
\Omega$.}$$

The global minimizer $\Qvec^*$ is a classical solution of the
Euler-Lagrange equations
\begin{equation}
\label{eq:dis4} 2 L\Delta \Qvec_{\alpha\beta} = a
\Qvec_{\alpha\beta} + b\left(\frac{1}{3}\textrm{tr}\Qvec^2
\delta_{\alpha\beta} - \Qvec_{\alpha p}\Qvec_{p \beta}\right) +
c\left(\Qvec_{pq}\right)^2\Qvec_{\alpha\beta} + \frac{4
\Qvec_{\alpha\beta}}{\eps^2}\left(|\Qvec|^2 - \frac{1}{6}\right).
\end{equation}
Repeating the same arguments as in the proof of
Theorem~\ref{thm:2}, we have the following inequality
$$ L\Delta |\Qvec|^2 \geq M(|\Qvec|) = a |\Qvec|^2 -
\frac{b}{\sqrt{6}}|\Qvec|^3 + c|\Qvec|^4 +
\frac{4|\Qvec|^2}{\eps^2}\left(|\Qvec|^2 - \frac{1}{6}\right).
$$

We study the function $M:\Omega \to \Rr$ above. This function
has precisely three zeros:
\begin{eqnarray}
\label{eq:dis5} && |\Qvec| = 0 \nonumber \\
&& |\Qvec| = \frac{b\eps^2}{\sqrt{6}(8 + 2\eps^2 c)} \pm
\frac{\sqrt{64 + 16\eps^2\left(c-6a\right) + \eps^4\left(b^2 -
24ac\right)}}{\sqrt{6}(8 + 2\eps^2 c)}
\end{eqnarray}
and $M(|\Qvec|)>0$ for $|\Qvec|>\frac{b\eps^2}{\sqrt{6}(8 +
2\eps^2 c)} + \frac{\sqrt{64 + 16\eps^2\left(c-6a\right) +
\eps^4\left(b^2 - 24ac\right)}}{\sqrt{6}(8 + 2\eps^2 c)}$.
Therefore, we must have
$$|\Qvec^*(\rvec^*)|\leq \frac{b\eps^2}{\sqrt{6}(8
+ 2\eps^2 c)} + \frac{\sqrt{64 + 16\eps^2\left(c-6a\right) +
\eps^4\left(b^2 - 24ac\right)}}{\sqrt{6}(8 + 2\eps^2 c)}$$ in
order to have $\Delta|\Qvec^*|^2 \leq 0$ at $\rvec^*\in\Omega$.

We, thus, conclude that
\begin{equation}
\label{eq:dis6} |\Qvec^*| \leq
\max\left\{\frac{1}{\sqrt{6}},\frac{b\eps^2}{\sqrt{6}(8 + 2\eps^2 c)} +
\frac{\sqrt{64 + 16\eps^2\left(c-6a\right) + \eps^4\left(b^2 -
24ac\right)}}{\sqrt{6}(8 + 2\eps^2 c)},
\max_{\rvec\in\partial\Omega}|\Qvec_0|\right\} \quad \textrm{on
$\bar{\Omega}$}
\end{equation}
where the second term accounts for the maximum of $\left|\Qvec_0\right|$ on
$\partial\Omega$.
\end{proof}

In the limit $\eps \to 0$ and for a physically realistic boundary
condition $\Qvec_0$ satisfying (\ref{eq:bc}), the upper bound
(\ref{eq:dis3}) reduces to
\begin{equation}
\label{eq:dis7} |\Qvec^*|\leq \frac{1}{\sqrt{6}} + O(\eps) \quad
\textrm{on $\bar{\Omega}$}
\end{equation}
and consequently, the equilibrium scalar order parameters take
values within $T_\psi$ for all temperature regimes. Further, one
can show that the stationary points of the modified bulk energy
density $$f_{B,\eps}(\Qvec) = f_B(\Qvec) + f_\eps(|\Qvec|)
$$ are either isotropic or uniaxial $\Qvec$-tensors as in Proposition~\ref{prop:uniaxial}
and $f_{B,\eps}$ also predicts a first-order nematic-isotropic
phase transition. Therefore, the modified Landau-De Gennes energy
functional $F_{\eps}$ reproduces all the qualitative features of
$I_{LG}$ in (\ref{eq:12}), whilst respecting the probabilistic
bounds (\ref{eq:7}) in the limit $\eps \to 0^+$, for all
temperature regimes.

However, the Ginzburg-Landau approach in (\ref{eq:dis2}) does not
contain any information about the probabilistic second-moment
definition of $\Qvec$ in (\ref{eq:5}). A more systematic approach
is given in \cite{luckhurst} where they define a modified bulk
energy density $\Psi_B$ from the Maier-Saupe free energy $I_{MS}$
in (\ref{eq:maier-saupe}).
\begin{equation}
\label{eq:dis12} \Psi_B(\Qvec) = \inf_{\psi \in
\Acal_\Qvec}\int_{S^2}\psi(\pvec)\log \psi(\pvec)~d\pvec -
\frac{1}{2}U(T)|\Qvec|^2
\end{equation}
where
\begin{equation}
\label{eq:dis13} \Acal_\Qvec = \left\{\psi \in
L^1\left(S^2;\Rr^+\right);~ \Qvec =
\int_{S^2}\left(\pvec\otimes\pvec -
\frac{1}{3}\mathbf{I}\right)\psi(\pvec)~d\pvec \right\}
\end{equation}
is the space of all probability distribution functions $\psi$ that
have a fixed normalized second moment $\Qvec$, as in (\ref{eq:5})\begin{footnote}{$\psi \in L^1\left(S^2;\Rr^+\right)$ simply means that $\int_{S^2}\psi(\pvec)~d\pvec$ is well-defined}\end{footnote}.

The first term in (\ref{eq:dis12}) is the entropy contribution,
where we minimize the integral over all probability distribution
functions that have a fixed second moment $\Qvec$. This term
diverges whenever the probabilistic bounds in (\ref{eq:7}) are
violated and enforces the equilibrium scalar order parameters to
lie strictly inside $T_\psi$. The second term in (\ref{eq:dis12})
is simply the Maier-Saupe interaction energy. The uniaxial case is
treated in \cite{luckhurst}. We plan to study the biaxial case and
include spatial inhomogeneities into this model. This will be
reported in future work \cite{jmbam}.

\section*{Acknowledgements} A.~Majumdar was supported by a Royal
Commission for the Exhibition of 1851 Research Fellowship till
October 2008. This publication is now based on work supported by
Award No. KUK-C1-013-04 , made by King Abdullah University of
Science and Technology (KAUST) to the Oxford Centre for
Collaborative Applied Mathematics. The author gratefully
acknowledges helpful comments and suggestions made by John Ball, Giovanni De Matteis, Geoffrey Luckhurst, 
Carlos Mora-Corral and Tim Sluckin.

\end{document}